\def\e{{\epsilon}}
\def\N{\mathcal{N}}
\def\ox{\overline{x}}
\def\ovr{\overline{\rho}}
\def\oA{\overline{A}}
\def\A{\mathcal{A}}
\def\B{\mathcal{B}}
\def\oa{\overline{a}}
\def\oG{\overline{G}}
\def\sjn{\sum_{j=1}^n}
\def\E{\mathcal{E}}
\def\oE{\mathcal{\overline{E}}}
\def\oN{\mathcal{\overline{N}}}
\def\0{{\bf 0}}
\def\1{{\bf 1}}
\def\e{{\bf e}}
\def\s{{\bf s}}
\def\R{\mathbb{R}}
\def\C{\mathbb{C}}
\def\A{{\mathcal A}}
\def\G{{\mathcal G}}
\def\jsr{{\rm jsr}}
\newtheorem{remark}[theorem]{\indent {Remark}}
\title{Consensus with Ternary Messages}
\author{Alex Olshevsky\thanks{Department of Industrial and Enterprise Systems Engineering, University of Illinois at Urbana-Champaign, Urbana, IL, 61801, USA ({\tt aolshev2@illinois.edu}).  }}
\begin{document}

\maketitle

\begin{abstract} We provide a protocol for real-valued average consensus by  networks of agents which exchange only a single
message from the ternary alphabet $\{-1,0,1\}$ between neighbors at each step.  
Our protocol works on time-varying undirected graphs subject to a connectivity condition, has a worst-case convergence time which is polynomial in the number of agents and the initial values, and requires no global knowledge about the graph topologies on the part of each node to implement except for knowing an upper bound on the
degrees of its neighbors. 
\end{abstract}

\begin{keywords} consensus protocols, multi-agent systems, distributed control.
\end{keywords}

\begin{AMS} 93A14, 93C55, 68Q85
\end{AMS}

\pagestyle{myheadings}
\thispagestyle{plain}

\section{Introduction\label{intro}}

\def\N{\mathcal{N}}
\def\ox{\overline{x}}
\def\ovr{\overline{\rho}}
\def\oA{\overline{A}}
\def\A{\mathcal{A}}
\def\B{\mathcal{B}}
\def\oa{\overline{a}}
\def\oG{\overline{G}}
\def\sjn{\sum_{j=1}^n}
\def\E{\mathcal{E}}
\def\oE{\mathcal{\overline{E}}}
\def\oN{\mathcal{\overline{N}}}
\def\0{{\bf 0}}
\def\1{{\bf 1}}
\def\e{{\bf e}}
\def\s{{\bf s}}
\def\R{\mathbb{R}}
\def\C{\mathbb{C}}
\def\A{{\mathcal A}}
\def\G{{\mathcal G}}
\def\jsr{{\rm jsr}}

The average consensus problem asks for a protocol by means of which $n$ agents with initial values $x_1(0), \ldots, x_n(0)$
can compute the average $\frac{1}{n} \sum_{i=1}^n x_i(0)$ subject to unpredictably time-varying restrictions on inter-agent communication. 
Recent years have seen a surge of interest in consensus protocols due to their widespread use as building blocks for distributed  control laws in multi-agent systems; for example, consensus protocols have been used for formation maintenance \cite{lby03, fm04, clvw05}, coverage control \cite{mkb04, mkb07, gcb08}, network clock synchronization \cite{lr06, ccsz11,sf11}, distributed
task assignment and partitioning \cite{cbh09}, statistical inference in sensor networks \cite{bgps06, xbk07, dsw08},  and many 
other contexts in which centralized control is absent and agent motion and time-varying interference can lead to repeated
failures of communication or sensing.

Consensus protocols typically involve each node $i$ maintaining a variable $x_i(t)$ which is updated from time $t-1$ to time $t$ by setting $x_i(t)$ to be a convex combination of those $x_j(t-1)$ for which $j$ and $i$ are neighbors in some undirected graph $G(t) = ( \{1, \ldots, n\}, E(t))$. The graph sequence $G(t)$ is meant to capture the constraints which dictate which pairs of agents can communicate or sense each other at each time.  It is assumed that each $G(t)$ has a self-loop at every node so that $x_i(t)$ can always depend on its previous value $x_i(t-1)$. A typical update is the Metropolis iteration (first introduced within the context of consensus in \cite{xbk07}), defined as
\begin{equation} \label{metropolis} x_i(t) = x_i(t-1) + \sum_{j \in N_i(t)} \frac{x_j(t-1) - x_i(t-1)}{D(i,j,t)} \end{equation} where $N_i(t)$ is the set of neigbors of node $i$ in $G(t)$, $d_i(t)$ is the degree of node $i$ in $G(t)$, and  $D(i,j,t)$ is a collection of numbers, defined for all pairs $i,j$ such that
$(i,j) \in G(t)$,  satisfying the symmetry conditions 
$D(i,j,t) = D(j,i,t)$ and the upper bounds $D(i,j,t) \geq \max ( d(i), d_j(t) )$. Note that choosing $D(i,j,t) = \max( d_i(t), d_j(t) )$ works, and choosing 
$D(i,j,t)=n$ for all $(i,j) \in G(t)$ works too, as well as any upper bound on $n$ known to the nodes. We will assume that the numbers $D(i,j,t)$ are uniformly bounded from above. We note that the nodes do not need to know any global information about the sequence $G(t)$ to implement this iteration, and we may in fact assume that $G(t)$ is exogenously or adversarially given subject to the connectivity constraint we will next
describe. Namely, we will assume that the graph sequence $G(t)$ satisfies the long-term connectivity condition  \begin{equation} \label{connectivity} \mbox{ For each } k, \mbox{ the graph } \left(\{1,\ldots,n\}, \cup_{t=k}^{\infty} E(t) \right) \mbox{ is connected}, \end{equation} meaning that the communication restrictions faced by the nodes do not
disconnect the network into noncommunicating groups after some finite time $k$. Under this assumption, 
the Metropolis iteration of Eq. (\ref{metropolis}) has the property  that \[ \lim_{t \rightarrow \infty} x_i(t) = \frac{1}{n} \sum_{i=1}^n x_i(t) ~~\mbox{ for each } i = 1, \ldots, n\] meaning that all agents suceed in converging to the average. We refer the
reader to the papers \cite{bhot05, JLM03, M05, OSM04, TBA86, xbk07} for  proofs of this and similar assertions.  

Consensus protocols have proven to be useful for multi-agent control due to their attractive robustness properties (namely, that the communication sequence $G(t)$ can vary arbitrarily and unpredictably subject only to the relatively weak long-term connectivity constraint of Eq. (\ref{connectivity})) and most importantly due to their local, distributed nature. Indeed, all the information that node $i$ needs to implement the Metropolis iteration, from its neighbor's values to upper bounds on its neighbors' degrees, is locally available. In the event that information exchange is wireless, nodes may broadcast their values and degrees to their neighbors; on the 
other hand, if $x_i(t)$ are quantities like positions or velocities which are physically measured by neighbors, implementation of the Metropolis iteration usually requires nodes to know upper bounds $D(i,j,t)$ on the degrees of their neighbors. These bounds can be based on the inherent sensing constraints (e.g., no node can be sensing too many other nodes simultaneously) or on an upper bound for the total number of nodes in the system.

We remark that a number of recent advances in multi-agent control have proceeded by reduction to an appropriately defined consensus problem \cite{gcb08, fm04, clvw05, lr06, sf11}. Moreover, under slightly stronger assumptions on the sequence $G(t)$, it is possible to design average consensus algorithms with convergence time bounds which scale quadratically with the number of nodes $n$ \cite{noot09}. 

However, a limitation of consensus protocols lies in the assumption that agent $i$ can update $x_i(t)$ as a function of the states $x_j(t-1)$ of its neighbors in the graph $G(t)$. While this is very natural in some settings (such as formation or coverage control, where $x_j(t)$ represents the position of agent $j$, which neighboring agent $i$ can be reasonably assumed to sense) in other contexts (such as clock synchronization or statistical inference in sensor networks) this implies that the agents can exchange real numbers at every step, a clearly unrealistic assumption. 

Indeed, sensor networks which run  distributed estimation protocols cannot transmit real-valued messages to neighbors; the physics of information transmission over the wireless medium instead limit the nodes to messages which come from a finite alphabet. The goal of this paper is to provide 
a consensus protocol which works in this setting. Specifically, we provide a deterministic protocol for the nodes to exchange messages with neighbors and update $x_i(t)$ which has the following properties:

\smallskip 

\begin{enumerate} \item  For every time-varying sequence $G(t)$ subject to a certain connectivity assumption, each $x_i(t)$ converges to the average of the initial values $\frac{1}{n} \sum_{i=1}^n x_i(0)$.
\item The number of bits that is transmitted from agent to agent at each time $t$ is bounded above independently of $t,n$ and all
other problem parameters (in fact
in our protocol the agents transmit a single element from the ternary alphabet $\{-1,0,1\}$ to each
neighbor at every $t$). 
\item The protocol does not require any global knowledge of the graph sequence $G(t)$ on the part of the nodes. 
\item The protocol has a worst-case convergence time which is provably scales polynomially in the number of nodes 
$n$ and the initial values $x_i(0)$. 
\end{enumerate} 

\smallskip

 We remark that features (1) and (3) are particularly crucial since much of the
attractiveness of consensus protocols arises from their abilities to cope with link failures and their local,
distributed nature. Feature (4) is clearly useful: the appeal of any protocol 
is increased by the availability of polynomial worst-case convergence bounds. Finally, feature (2) ensures that we do not implicitly assume that an arbitrarily large amount of data can transmitted on each link before the graph changes

Much like the Metropolis protocol, our protocol assumes every node has access to upper bounds $D(i,j,t)$ on the degres of its neighbors. Moreover, we remark that
our protocol may be considered to have binary (rather than ternary) messages if we identify no transmission on a link with a ``$0$'' message.

A number of consensus protocols in which agents exchange finitely many bits at every step have recently been proposed. We mention the paper \cite{kbs07} which
initiated the literature, the follow-on works \cite{noot09, frasca, xie2, princeton, lm12}, as well as the related papers \cite{cai1, cai2}. In \cite{kbs07} and follow-up papers nodes exchange quantized values in order to eventually approach a neighborhood of the initial average. Variations on this theme have been 
studied in the past several years, and we mention that the recent preprint \cite{princeton} has the 
best known convergence time bounds at the time of writing for these updates in discrete time. Finally, a similar consensus process has been studied recently over finite fields in \cite{finite}.

More directly related to the current work are the papers \cite{cbz10, cffz10, fcfz09, lfxz11, xiesiam} which considered the problem using quantized communications to converge to the real-valued average of the initial values {\em exactly}, as we do here. The present paper is directly motivated by this literature, in particular by the observation that each of the protocols in these papers lacks at least two of the features (1)-(4) above. Indeed, many of these protocols only work on fixed graphs, and furthermore utilize updates at each node which depend on global information about the network (for example by having each node's update depend on the eigenvalues of a matrix built from the network). By contrast, the protocol we propose in this paper requires no knowledge about the time-varying
graph sequence $G(t)$ to implement with the possible exception of each node having  
upper bounds on the degrees of its neighbors.  

\smallskip

We now describe the organization of the remainder of the paper. We begin by stating our main result in Section \ref{statement}, where we provide an informal description as well as a formal statement of our protocol. The proof of the main convergence theorem is given in the subsequent Section \ref{mainthmproof}. Some simulations of our
our protocol are provided in Section \ref{simulations}. Finally, Section \ref{conclusions} provides some conclusions and lists some open problems. We note in particular that our new consensus 
protocol makes more stringent than usual assumptions on the graph sequence $G(t)$ and has more storage than the ordinary consensus 
algorithms, and we pose the improvement of these features as open problems. 

\section{Our results\label{statement}} We first reprise the notation we have introduced:  $G(t)$ is a sequence of undirected graphs with a self-loop at every node, $N_i(t)$ is the set of neighbors 
of node $i$ in $G(t)$, and $d_i(t)$ is the degree of node $i$ in $G(t)$. 

\subsection{Intuitive description of the protocol} We begin by informally describing the idea behind our protocol. We would like to run the Metropolis update of Eq. (\ref{metropolis}), but without
the ability to transmit real numbers, node $i$ will not know the values of its neighbors $x_j(t-1)$ exactly. Consequently, node $i$ will maintain an estimate of $x_j(t)$; we will use the notation $\widehat{x}_{i,j,{\rm in}}(t)$ for the estimate
that node $i$ has at time $t$ for $x_j(t)$.  


At each time $t$, node $i$ will receive a message from each of its neighbors $j \in N_i(t)$ from the alphabet $\{-1,0,1\}$. If it receives a $+1$, it will add $1/t^{\alpha}$  to $\widehat{x}_{i,j, {\rm in}}$; if it receives a $-1$, it adds a $-1/t^{\alpha}$ to 
the same; and if 
it receives a zero, it leaves $\widehat{x}_{i,j, {\rm in}}$ unchanged. Naturally, node $j$ decides to send a $1,0,-1$ depending on whether $\widehat{x}_{i,j, {\rm in}}(t-1)$ is too low by at least $1/t^{\alpha}$, within $1/t^{\alpha}$ of the true value, or too high by a factor of at least $1/t^{\alpha}$, respectively. A key point is that even though it is node $i$ which maintains the variable $\widehat{x}_{i,j, {\rm in}}$, node $j$ knows 
what it is because it is built from previous messages sent by node $j$ to node $i$. 

Note that this update rules may require nodes to send different messages to different neighbors. 

The number $\alpha$ will be in  $(0,1)$ so that
\[ \sum_{t=1}^n \frac{1}{t^{\alpha}} = + \infty \] which will be key in ensuring that the estimates $\widehat{x}_{i,j, {\rm in}}$ increasingly 
become accurate for links that appear regularly. 

Meanwhile, the nodes will implement the Metropolis update, however with each node $i$ using its latest estimate $\widehat{x}_{i,j, {\rm in}}$ 
instead of the true value $x_j(t-1)$. However, node $i$ will not include all its neighbors in the Metropolis updates; 
rather, it will include only those nodes $j$ which sent it a zero at time $t$ (meaning that node $i$ estimate of $j$'s value is 
not too far from the truth) and whose associated estimates $\widehat{x}_{i,j, {\rm in}}(t)$ are sufficiently far from $x_i(t)$ (so that the inevitable 
pertubration error inherent in using imprecise estimates does not affect the convergence analysis too much).

Furthermore,  we introduce a stepsize of $1/t^{\beta}$ 
into the Metropolis algorithm where $\beta>\alpha$ and $\beta \in (0,1)$. By choosing $\beta > \alpha$, we 
ensure that agents change their values $x_i$ slower than estimates $\widehat{x}_{i,j, {\rm in}}$ change.  Intuitively, while 
the estimates $\widehat{x}_{i,j, {\rm in}}$ will get accurate by 
an additive factor of $1/t^{\alpha}$ whenever they are very inaccurate, the values $x_i$ will have their movement
attenuated by a factor of $1/t^{\beta}$.  The introduction of the $1/t^{\beta}$ stepsize ensures that not only do the estimates
eventually ``catch up'' to the true values but also that we can give simple bounds on
how long it takes until estimates become accurate. 

\subsection{Formal description of the protocol\label{description}}

The nodes begin with initial values $x_i(0)$. Every node $i$ will maintain variables $\widehat{x}_{i,j, {\rm in}}(t),\widehat{x}_{i, j, {\rm out}}(t)$ for every node which has been its neighbor 
in some past $G(t)$. Node $i$ initializes $$\widehat{x}_{i,j, {\rm in}}(t-1)=\widehat{x}_{i, j, {\rm out}}(t-1)=0$$ at the
first time $t$ when the edge $(i,j)$ belongs to $E(t)$.  

At each iteration $t=1, 2,3, \ldots$, node $i$ will send a value from the set $\{-1,0,1\}$ to each of its neighbors $j$ in $G(t)$. The value $i$ sends to $j$ is  \[ q_{ i \rightarrow j} (t) = R \left[ t^{\alpha} ({x_i(t-1) - \widehat{x}_{i,j,{\rm out}}(t-1)}) \right] \] where \[ R[x] = \begin{cases} 1 & \mbox{ if } x > 1 \\ 0 & \mbox{ if } -1 \leq x \leq 1 \\ -1 & \mbox{ if } x < -1 \end{cases} \] Note that node $i$ may send  different messages to different neighbors.  After sending $q_{i \rightarrow j}(t)$ to each neighbor $j \in N_i(t)$ and receiving $q_{j \rightarrow i}(t)$ from the same, node $i$ updates the values $\widehat{x}_{i,j, {\rm in}}, \widehat{x}_{i,j, {\rm out}}$ as
\begin{eqnarray*} \widehat{x}_{i,j, {\rm out}}(t) &  = &   \widehat{x}_{i,j, {\rm out}}(t-1) + \frac{q_{i \rightarrow j}(t)}{t^{\alpha}} \\
\widehat{x}_{i, j, {\rm in}}(t) &  = &   \widehat{x}_{i,j, {\rm in}}(t-1) + \frac{q_{j \rightarrow i}(t)}{t^{\alpha}}
 \end{eqnarray*} If $(i,j) \notin G(t)$ and consequently no message from $j$ to $i$ was sent, but $\widehat{x}_{i,j, {\rm in}}, \widehat{x}_{i,j, {\rm out}}$ have  been previously initialized, then node $i$ simply keeps the variables $\widehat{x}_{i,j, {\rm in}}, \widehat{x}_{i,j, {\rm out}}$ as they are: 
\begin{eqnarray*} \widehat{x}_{i, j, {\rm in}}(t) & = & \widehat{x}_{i,j, {\rm in}}(t-1) \\ 
 \widehat{x}_{i,j, {\rm out}}(t) & = & \widehat{x}_{i,j, {\rm out}}(t-1). 
\end{eqnarray*}   Each node then updates its value $x_i(t)$ as
 \begin{equation} \label{mainiter2} x_i(t) =    x_i(t-1) + \frac{1}{t^{\beta}} \sum_{j \in S(i,t)} \frac{\widehat{x}_{i,j,{\rm in}}(t) - \widehat{x}_{i,j, {\rm out}}(t)} {4 D(i,j,t)} \end{equation}
 Here $D(i,j,t)$ is, as in the Metropolis protocol, a collection of numbers satisfying the symmetry conditions $D(i,j,t) = D(j,i,t)$ and the upper bounds $D(i,j,t) \geq 
\max( d_i(t), d_j(t))$; and $S(i,t)$ is the set of neighbors $j$ of node $i$ in $G(t)$ which satisfy $$|\widehat{x}_{i,j,{\rm in}}(t)-\widehat{x}_{i,j, {\rm out}}(t)| > \frac{4}{t^{\alpha}}$$ and $$q_{j \rightarrow i}(t)= 0, ~~~~q_{i \rightarrow j}(t) = 0.$$ 
 
\subsection{Main result} We now provide the main convergence theorem for our protocol. We begin with a few definitions needed to specify the class of graph sequences on which our protocol is guaranteed to work.

\bigskip
%

\begin{definition} We will call the graph sequence $G(t)$ $B$-core-connected if there exists a set of edges $E_{\infty} \subset \cup_t E(t)$ such that the graph $(\{1, \ldots, n\}, E_{\infty})$ is 
connected; and \[ E_{\infty} \subset \cup_{t=kB+1}^{(k+1) B} E(t)  \] for every nonnegative integer $k$. 
\end{definition}

\bigskip

That is, a sequence is $B$-core-connected if there is a set of edges, forming a connected graph, each of which
appears in every interval $[kB+1, (k+1)B]$. We will say that the edges in $E_{\infty}$ are {\em core edges}. 

Our main result provides upper bounds on the time until our consensus protocol reduces a certain measure of disagreement to a small value
forever. We now define this measure (this is $V_2(t))$ in the next definition) as well as several related
concepts. 

\bigskip

\begin{definition} \begin{eqnarray*} 
M(x) & =  & \max_{i=1, \ldots, n} x_i \\
m(x) & = & \min_{i=1, \ldots, n} x_i \\
W(x) & = & M(x) - m(x) \\
V_2(x) & = & \sqrt{\sum_{i=1}^n \left( x_i - \frac{1}{n} \sum_{j=1}^n x_j \right)^2} \\ 
\end{eqnarray*} 
\end{definition} Note that will use the natural shorthands $M(t), m(t), W(t),  V_2(t)$  for $M(x(t)), m(x(t)), 
W(x(t)), V_2(x(t))$. Finally, we will use the notation $D = \sup_{i,j,t} D(i,j,t)$ for the supremum of the all the 
quantities $D(i,j,t)$ upper bounding the degrees. Note that if $D(i,j,t) = \max_i (d_i(t), d_j(t))$ then we 
can trivially bound $D \leq n$. 

\bigskip

We can now state the main result of this paper. 

\bigskip

\begin{theorem} \label{precursorthm} If $0 < \alpha < \beta < 1$ then for all nodes $i$, initial values $x(0)$, and $B$-core-connected sequences $G(t)$, it is true that \[ \lim_{t \rightarrow \infty} x_i(t) = \frac{1}{n} \sum_{j=1}^n x_j(0). \] Moreover, if\footnote{The notation $\lceil x \rceil$ refers to the smallest integer which is at least $x$.} \begin{small} \begin{eqnarray*} t \geq
2^{\frac{1}{1-\beta}} \left( 2^{\frac{2}{1-\alpha}} \lceil 32B + 8 B W(0) \rceil^{\frac{1}{\beta-\alpha}} + \left( 32 B ||x(0)||_{\infty} \right)^{\frac{2}{1-\alpha}} + 11B+ (300 n^3 D B )^{\frac{1}{1-\beta}} 
\right) && \\ + \max \left( \left( 150 n^3 D B \log \frac{V_2(0)}{\epsilon} \right)^{\frac{1}{1-\beta}}, \left( \frac{8 n^{1.5}}{\epsilon} \right)^{\frac{1}{\alpha}} \right) && \end{eqnarray*} \end{small} then \[ V_2(x(t)) \leq \epsilon \] for all $i,j$. 
\end{theorem}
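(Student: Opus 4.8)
The plan is to track two coupled quantities: the disagreement $V_2(t)$ of the true values $x_i(t)$, and the estimation errors $e_{i,j}(t) := \widehat{x}_{i,j,\mathrm{in}}(t) - x_j(t)$ (equivalently, since $\widehat{x}_{i,j,\mathrm{out}}$ is built from messages $i$ sends, the error between what $i$ thinks $j$ holds and what $j$ actually holds). The key structural fact to establish first is \emph{mass conservation}: from the update \eqref{mainiter2} and the symmetry $D(i,j,t)=D(j,i,t)$, whenever $j\in S(i,t)$ we also have $i\in S(j,t)$ (both conditions defining $S$ are symmetric in $i,j$ once we note $q_{j\to i}=q_{i\to j}=0$ forces $\widehat{x}_{i,j,\mathrm{in}}=\widehat{x}_{i,j,\mathrm{out}}$ up to... — actually one must be careful here, since $\widehat{x}_{i,j,\mathrm{in}}$ and $\widehat{x}_{j,i,\mathrm{out}}$ are literally the same running sum, so the term node $i$ adds is the negative of the term node $j$ adds). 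Hence $\sum_i x_i(t) = \sum_i x_i(0)$ for all $t$, so it suffices to drive $V_2(t)\to 0$.

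**Next I would** handle the estimates. Fix a core edge $(i,j)$. It appears at least once in every window of length $B$. The claim is that after a burn-in time, $|\widehat{x}_{i,j,\mathrm{in}}(t) - x_j(t)| \le c/t^\alpha$ and similarly for the "out" copy — i.e. the rounding/ternary machinery tracks the slowly-moving true value. The mechanism: each time the edge is active, the message $q_{i\to j}$ moves $\widehat{x}_{j,i,\mathrm{in}}$ toward $x_i$ by exactly $1/t^\alpha$ unless it is already within $1/t^\alpha$; meanwhile $x_i$ itself moves by at most $O(1/t^\beta) \cdot O(W(t)/D) \ll 1/t^\alpha$ per step since $\beta > \alpha$. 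Because $\sum 1/t^\alpha$ diverges, within one $B$-window the estimate closes any fixed gap, and the burn-in time is governed by how large the initial gap $\sim \|x(0)\|_\infty$ is versus the cumulative $\sum 1/t^\alpha \sim t^{1-\alpha}/(1-\alpha)$ — this is exactly where the $(32 B \|x(0)\|_\infty)^{2/(1-\alpha)}$ and $\lceil 32B + 8BW(0)\rceil^{1/(\beta-\alpha)}$ terms in the theorem come from. I would prove a lemma: for $t \ge T_1 := $ (that explicit expression), every core edge active at time $t$ has both estimate copies within, say, $4/t^\alpha$ of the truth, and consequently the corresponding true values get a genuine Metropolis-type contribution whenever $|x_i(t)-x_j(t)|$ is not already $O(1/t^\alpha)$-small (the set $S(i,t)$ contains all such core edges).

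**Then** the convergence of $V_2$ follows a standard time-varying-consensus argument with perturbation. On the subgraph of core edges, \eqref{mainiter2} is a Metropolis iteration with stepsize $1/t^\beta$ applied to $\widehat{x}$-values that differ from the true $x$-values by $O(1/t^\alpha)$; writing $x(t+1) = x(t) - \frac{1}{t^\beta} L(t) x(t) + \frac{1}{t^\beta}(\text{error})$ with $L(t)$ a graph Laplacian-like operator that is connected over each $B$-window, one gets a contraction of $V_2$ by a factor $1 - \Omega\!\big(\frac{1}{n^{?} D B \, t^\beta}\big)$ per window (the spectral gap of a connected graph on $n$ nodes with these weights is $\ge 1/(\text{poly}(n) D)$), against an additive perturbation of size $O(\sqrt n / t^{\alpha+\beta})$ per step, i.e. $O(\sqrt n\, B / t^{\alpha+\beta})$ per window. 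Summing the multiplicative recursion $V_2(T) \lesssim V_2(0)\exp(-c\sum_{t=T_1}^{T} t^{-\beta}) + (\text{perturbation floor})$ and using $\sum_{t=T_1}^T t^{-\beta} \sim T^{1-\beta}$: to get the exponential term below $\epsilon$ we need $T^{1-\beta} \gtrsim n^3 D B \log(V_2(0)/\epsilon)$, giving the $(150 n^3 D B \log(V_2(0)/\epsilon))^{1/(1-\beta)}$ term; and the perturbation floor, which behaves like $\sqrt n / t^\alpha$ (summing $\sum_{s\ge t} s^{-\alpha-\beta}$ weighted by the contraction), must be below $\epsilon$, giving the $(8 n^{1.5}/\epsilon)^{1/\alpha}$ term. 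One also needs the nodes in $S(i,t)$ that are \emph{not} core edges to not hurt: by the definition of $S$ and the $R[\cdot]$ rounding, any $j\in S(i,t)$ has $q_{j\to i}(t)=q_{i\to j}(t)=0$, which forces $|\widehat{x}_{i,j,\mathrm{in}}(t) - \widehat{x}_{i,j,\mathrm{out}}(t)|$ close to $|x_i - \widehat{x}_{j,i,\mathrm{out}}|$... — the $1/(4D(i,j,t))$ weighting and the threshold $4/t^\alpha$ in the definition of $S(i,t)$ are precisely tuned so that these extra terms are still "downhill" (move $x_i$ toward $x_j$) or at worst contribute controllably to the perturbation, and I would verify the standard fact that adding extra non-negative-definite averaging terms cannot increase $V_2$ beyond the perturbation already accounted for.

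**The main obstacle** I expect is the coupled bootstrap between the two lemmas: the estimate-accuracy lemma needs $x_i(t)$ to move slowly (bounded $W(t)$, hence need an a priori bound $W(t) \le W(0) + O(1)$ or so for all $t \ge$ something), while the $V_2$-contraction lemma needs the estimates to already be accurate. Breaking this circularity cleanly — proving first that $W(t)$ (or $\|x(t)\|_\infty$) stays bounded \emph{unconditionally} using only $\beta>\alpha$ and the explicit $1/t^\beta$ damping, \emph{then} the estimate lemma, \emph{then} the $V_2$ decay — is the delicate part, and getting the burn-in constants ($32B$, $8BW(0)$, the $2^{1/(1-\beta)}$ prefactor absorbing lower-order terms, etc.) to come out as stated requires careful but routine bookkeeping of the $\sum_{s=T_0}^{t} s^{-\alpha}\ge \frac{1}{2}(t^{1-\alpha}-T_0^{1-\alpha})$-type estimates. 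The asymptotic statement $\lim_t x_i(t) = \frac1n\sum_j x_j(0)$ is then immediate by letting $\epsilon \to 0$ together with mass conservation.
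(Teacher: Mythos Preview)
Your high-level architecture (mass conservation $\Rightarrow$ estimate-accuracy lemma on core edges $\Rightarrow$ $V_2$ contraction) matches the paper's. But you miss the key structural observation that makes the paper's argument clean and dissolves precisely the ``main obstacle'' you flag.

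You treat the dynamics as ``Metropolis on $\widehat{x}$-values $=$ Metropolis on $x$-values $+$ $O(t^{-\alpha})$ perturbation,'' and then plan to chase a multiplicative contraction against an additive floor. The paper does something sharper: it shows the update is \emph{exactly} a symmetric stochastic averaging of the true values, with no perturbation term at all. Concretely, whenever $j\in S(i,t)$ one has $q_{i\to j}(t)=q_{j\to i}(t)=0$, so both $\widehat{x}_{i,j,\mathrm{in}}(t)$ and $\widehat{x}_{i,j,\mathrm{out}}(t)$ are within $1/t^{\alpha}$ of $x_j(t-1)$ and $x_i(t-1)$ respectively; combined with the threshold $|\widehat{x}_{i,j,\mathrm{in}}(t)-\widehat{x}_{i,j,\mathrm{out}}(t)|>4/t^{\alpha}$, this forces the ratio
\[
w_{ij}(t-1)\;:=\;\frac{\widehat{x}_{i,j,\mathrm{in}}(t)-\widehat{x}_{i,j,\mathrm{out}}(t)}{x_j(t-1)-x_i(t-1)}\;\in\;[2/3,\,2].
\]
Hence the update can be rewritten as $x(t)=(1-t^{-\beta})x(t-1)+t^{-\beta}A(t-1)x(t-1)$ with $A(t-1)$ symmetric, stochastic, and diagonally dominant. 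This holds for \emph{every} $j\in S(i,t)$, core or not, at \emph{all} times $t\ge 1$, not just after a burn-in.

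The payoff is that $M(t)$ is nonincreasing, $m(t)$ is nondecreasing, and $V_2(t)$ is nonincreasing \emph{unconditionally from time zero}. So $W(t)\le W(0)$ always, the ``coupled bootstrap'' you worry about never arises, mass conservation is immediate from symmetry of $A$, and the final $V_2$ argument is a pure dichotomy (either $V_2(t)\le 8n^{1.5}/t^{\alpha}$ already, or $V_2$ drops by a multiplicative factor $1-\Omega(1/(n^3Dt^{\beta}))$ over the next $3B$ steps) rather than a contraction-plus-floor recursion. Your perturbation route may be salvageable, but as written you have not actually broken the circularity you identify, and the paper's route shows it is unnecessary.
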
   

\bigskip

Our result states that the consensus protocol of the previous section succeeds in computing the average on 
$B$-core-connected sequence and provides a bound for its convergence time. 

\bigskip

\begin{remark} While the above convergence time expression is somewhat unwieldy, we emphasize that it is polynomial
in $n, \epsilon, B,  W(0), V(0), ||x(0)||_{\infty}$ for any choice of $\alpha$ and $\beta$ satisfying the assumption $0 < \alpha < \beta < 1$. 
\end{remark}

\bigskip

\begin{remark} Observe that there is a tradeoff between the steady-state and transient terms in the convergence time bound. 
In particular, as $\epsilon \rightarrow 0$, the term $(8 n^{1.5}/\epsilon)^{1/\alpha}$ is going to dominate all the other
terms in the convergence time expression. Consequently, 
to obtain the best asymptotic decay rate we should choose $\alpha$ close to $1$ which means we must also choose $\beta$ close
to $1$.  In that case, error decay at time $t$  decays nearly as well as $O(n^{1.5}/t)$ as $t$ approaches
infinity. However, choosing $\alpha_n \rightarrow 1$ and $\beta_n \rightarrow 1$ causes the transient term to blow up, so that it 
takes longer and longer until the asymptotically dominant term dominates the other terms. 

In general, there is no single best choice of $\alpha, \beta$ which optimizes the convergence time expression; rather every choice gives us a tradeoff between 
transient bounds and steady-state decay. For example, choosing $\alpha=3/4, \beta=7/8$ gives us that the time until 
$V_2(x(t)) \leq \epsilon$ is \[ O \left( \left( B + B W(0) \right)^8 \right) + O \left( B ||x(0)||_{\infty} \right)^{8} + O(B) +  O(n^{24} D^8 B^8) + \max \left( O(n^{24} D^8 B^8 (\log \frac{V_2(0)}{\epsilon})^8, O \left( \frac{ n^2}{\epsilon^{4/3}} \right) \right)\] Note that every term which does not have an $\epsilon$ in it will become negligible as $\epsilon \rightarrow 0$. In this limit, the dominant term will be
$O( \frac{8 n^2}{\epsilon^{4/3}})$ which will grow faster as $\epsilon \rightarrow 0$ than the logarithmic term
$O( n^{24} D^8 B^8 \log \frac{V_2(0)}{\epsilon})$.   

On the other hand, choosing $\alpha=1/4, \beta=1/2$ gives us that the time until $V_2(x(t)) \leq 
\epsilon$ is \[  O \left( \left( B + B W(0) \right)^4 \right) + O \left( B ||x(0)||_{\infty} \right)^{8/3} + O(B) + O(n^{6} D^2 B^2) + \max \left( O(n^{6} D^2 B^2 (\log \frac{V_2(0)}{\epsilon})^2, O\left( \frac{n^6}{\epsilon^{4}} \right) \right) \] which has a worse asymptotic term $O\left( \frac{n^6}{\epsilon^{4}} \right)$ but the rest of the terms are smaller if $B, W(0), ||x(0)||_{\infty}, V_2(0)$ are large.
\end{remark}

\bigskip

\begin{remark} The analysis of consensus algorithms usually relies on a weaker notion of connectivity, namely
the so-called $B$-connectivity (sometimes called uniform connectivity) condition:  a sequence $G(t)$ of undirected graphs is 
called $B$-connected if for each $k$, the graph
\[ \left(  \{1, \ldots, n\}, \cup_{t=kB+1}^{(k+1) B} E(t) \right) \] is connected. For any fixed $B$, there are more 
$B$-connected sequences than $B$-core-connected sequences. It is an open question to prove a convergence time 
bound similar to our main result on $B$-connected sequences. \end{remark}

\bigskip

\begin{remark} Our protocol requires node $i$ to keep track of the numbers $\hat{x}_{i,j, {\rm in}}, \hat{x}_{i,j, {\rm out}}$ for every other node $j$ it interacts with. By contrast, the standard consensus algorithm keeps track of only
a single real number, namely $x_i(t)$ at node $i$. Unfortunately, it seems that a ``storage blowup'' phenomenon of this sort is unavoidable, though it 
remains an open question to prove this in any formal sense. 

As a consequence, our consensus protocol is is most attractive on graphs sequences $G(t)$ in which every node interacts with
a relatively small number of neighbors. One such example is the geometric random graph model of wireless networks, in which sensors have locations in $[0,1]^2$ and every node is connected to an expected $O(\log n)$ neighbors \cite{penrose}. Our consensus protocol 
will work in such networks, even if unpredictable interference or maliicous jamming makes links unreliable, with every node needing to store only $O(\log n)$ additional estimates. \end{remark} 

\bigskip

\begin{remark} We have not assumed that the nodes know the constant $B$. However, if the nodes do happen to know 
$B$ or an upper bound on it, they may use it to somewhat reduce their storage requirements. Rather than keep track of the estimates
$\widehat{x}_{i,j, {\rm in}}$ for every node $j$ that it has interacted with in the past, node $i$ can instead just keep track of estimates for 
just those nodes it has interacted with in the past $B$ steps. 

Our main result, Theorem \ref{precursorthm}, will then hold verbatim, since our proof of this theorem (in the next section) proceeds by analyzing reductions in $V_2(t)$ 
obtained when neighbors connected by core edges include each other in their Metropolis updates, which are unaffected by this modification. 
\end{remark}

\bigskip

\begin{remark} The introduction of the stepsize $1/t^{\beta}$ with $\beta > \alpha$ is crucial for our proof here. A main thrust of our argument 
is that after some transient period, we will have that $1/t^{\beta}$ is negligible compared to $1/t^{\alpha}$, so that estimates across each core link
get accurate due to messages between nodes much faster than they drift apart due to node updates. This argument is the source of the exponent $\frac{1}{\beta-\alpha}$ in the first term of our final convergence time bound. 
\end{remark}

\section{Main proof\label{mainthmproof}} This section is devoted to the proof of our main result, 
Theorem \ref{precursorthm}. We will begin with a long sequence of lemmas which will establish properties of our 
consensus protocol, putting off the calculations that lead to the bound of Theorem \ref{precursorthm} as much as possible. 

Roughly speaking, our initial sequence of lemmas will establish elementary properties of the protocol. In particular, we rewrite 
our protocol in more convenient form and establish that several Lyapunov functions are conserved by our protocol. Our first substantive statement
then comes in Lemma \ref{correctestimate} which states that, across core edges, estimates get more and more accurate over time. Shortly
thereafter, Lemma \ref{mainlemma} establishes that, after a transient period, the dispersion of the values $x_i(0)$ will decrease by
a certain proportion whenever it is not already small. The complete proof of Theorem \ref{precursorthm} follows shortly afterwards.

\bigskip

We now proceed to our first
lemma, which states the natural fact that the estimate $\widehat{x}_{j,i, {\rm out}}$ maintained by node $j$ equals 
the estimate $\widehat{x}_{i,j, {\rm in}}$ maintained by node $i$ (intuitively, this says that each node knows all other node's estimates
of its own value).

\smallskip 

\begin{lemma} \label{estimates} \[ \widehat{x}_{i,j, {\rm in}}(t) = \widehat{x}_{j,i, {\rm out}}(t) \] whenever 
these variables are defined. 
\end{lemma}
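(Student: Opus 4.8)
The plan is to prove this by induction on $t$, tracking the two update rules for $\widehat{x}_{i,j,\mathrm{out}}$ and $\widehat{x}_{j,i,\mathrm{in}}$ side by side and checking that they are driven by the same message at every step.

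\textbf{Base case.} First I would check the variables at the time they are initialized. Both $\widehat{x}_{i,j,\mathrm{in}}$ and $\widehat{x}_{j,i,\mathrm{out}}$ are defined at the first time $t_0$ for which $(i,j)\in E(t_0)$ (the initialization is triggered by the edge appearing, which is a symmetric condition, so both nodes initialize at the same time). At that moment node $i$ sets $\widehat{x}_{i,j,\mathrm{in}}(t_0-1)=0$ and node $j$ sets $\widehat{x}_{j,i,\mathrm{out}}(t_0-1)=0$, so the two agree before the first update.

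\textbf{Inductive step.} Assume $\widehat{x}_{i,j,\mathrm{in}}(t-1)=\widehat{x}_{j,i,\mathrm{out}}(t-1)$. There are two cases. If $(i,j)\in E(t)$, then both variables get updated: node $j$ sends $q_{j\to i}(t)=R[t^\alpha(x_j(t-1)-\widehat{x}_{j,i,\mathrm{out}}(t-1))]$ to node $i$, and this very number is what node $i$ adds (divided by $t^\alpha$) to $\widehat{x}_{i,j,\mathrm{in}}$; meanwhile node $j$ adds $q_{j\to i}(t)/t^\alpha$ to $\widehat{x}_{j,i,\mathrm{out}}$. Hence
\[
\widehat{x}_{i,j,\mathrm{in}}(t)=\widehat{x}_{i,j,\mathrm{in}}(t-1)+\frac{q_{j\to i}(t)}{t^\alpha}=\widehat{x}_{j,i,\mathrm{out}}(t-1)+\frac{q_{j\to i}(t)}{t^\alpha}=\widehat{x}_{j,i,\mathrm{out}}(t),
\]
using the inductive hypothesis in the middle. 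If $(i,j)\notin E(t)$, then by the protocol both nodes simply freeze their respective variables, so $\widehat{x}_{i,j,\mathrm{in}}(t)=\widehat{x}_{i,j,\mathrm{in}}(t-1)=\widehat{x}_{j,i,\mathrm{out}}(t-1)=\widehat{x}_{j,i,\mathrm{out}}(t)$. This closes the induction.

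There is essentially no obstacle here; the only thing requiring a moment of care is matching up the ``freeze'' conventions and making sure the two variables are introduced into the system at the same time, which follows because edge appearance is a symmetric event. The content of the lemma is just the bookkeeping observation that node $i$'s incoming estimate of $j$ is reconstructed verbatim from messages that node $j$ itself generated, so node $j$ can track it exactly as its own outgoing estimate.
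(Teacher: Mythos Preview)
Your proof is correct and follows essentially the same approach as the paper: both argue that the two variables are initialized to zero at the same moment and thereafter receive the identical increment $q_{j\to i}(t)/t^{\alpha}$ (or none) at each step. You are simply a bit more explicit about the induction and the ``freeze'' case when $(i,j)\notin E(t)$, which the paper leaves implicit.
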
 

\bigskip

\begin{remark} Recall that the variables $\widehat{x}_{i,j, {\rm in}}$ and $\widehat{x}_{j,i, {\rm out}}$ are 
initialized and maintained at the first time $(i,j)$ appears in the sequence $G(t)$. 
\end{remark}

\bigskip

\begin{proof} The first time $(i,j) \in E(t)$, the variables $\widehat{x}_{i,j, {\rm in}}(t-1)$ and $\widehat{x}_{j,i, {\rm out}}(t-1)$ are initialized to zero by both nodes $i$ and $j$. Subsequently, they are updated as (see Section \ref{description}) 
\begin{eqnarray*} \widehat{x}_{j,i, {\rm out}}(t) &  = &   \widehat{x}_{j,i, {\rm out}}(t-1) + \frac{q_{j \rightarrow i}(t)}{t^{\alpha}} \\
\widehat{x}_{i, j, {\rm in}}(t) &  = &   \widehat{x}_{i,j, {\rm in}}(t-1) + \frac{q_{j \rightarrow i}(t)}{t^{\alpha}}
 \end{eqnarray*} at each time $t$ such that $(i,j) \in E(t)$. Consequently, they have the same value at all times.
\end{proof}

\bigskip

The next corollary tells us that if node $i$ includes its estimate of the value of node $j$ in its Metropolis update,  then, 
symmetrically, node
 $j$ includes its estimate of node $i$'s value in its own Metropolis update. 

\smallskip

\bigskip

\begin{corollary} \label{symlinks} $j \in S(i,t)$ if and only if $i \in S(j,t)$. 
\end{corollary}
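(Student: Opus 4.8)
The plan is to unpack the definition of $S(i,t)$ for both directions and observe that every defining condition is symmetric in $i$ and $j$. Recall that $j \in S(i,t)$ means three things hold: $(i,j) \in E(t)$ (so $j$ is a neighbor of $i$ in $G(t)$), the size condition $|\widehat{x}_{i,j,{\rm in}}(t) - \widehat{x}_{i,j,{\rm out}}(t)| > 4/t^{\alpha}$, and the zero-message condition $q_{j \rightarrow i}(t) = 0$ and $q_{i \rightarrow j}(t) = 0$. Since $G(t)$ is undirected, $(i,j) \in E(t)$ is the same statement as $(j,i) \in E(t)$, so the neighbor condition is manifestly symmetric.

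Next I would handle the zero-message condition, which is symmetric by inspection: the pair of equalities $q_{j \rightarrow i}(t) = 0$ and $q_{i \rightarrow j}(t) = 0$ is literally unchanged when we swap the roles of $i$ and $j$, since $i \in S(j,t)$ requires $q_{i \rightarrow j}(t) = 0$ and $q_{j \rightarrow i}(t) = 0$.

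The only slightly substantive step is the size condition, and here the plan is to invoke Lemma \ref{estimates}. We have $\widehat{x}_{i,j,{\rm in}}(t) = \widehat{x}_{j,i,{\rm out}}(t)$ and, applying the same lemma with the roles of $i$ and $j$ exchanged, $\widehat{x}_{i,j,{\rm out}}(t) = \widehat{x}_{j,i,{\rm in}}(t)$. Therefore
\[ |\widehat{x}_{i,j,{\rm in}}(t) - \widehat{x}_{i,j,{\rm out}}(t)| = |\widehat{x}_{j,i,{\rm out}}(t) - \widehat{x}_{j,i,{\rm in}}(t)| = |\widehat{x}_{j,i,{\rm in}}(t) - \widehat{x}_{j,i,{\rm out}}(t)|, \]
so the inequality $> 4/t^{\alpha}$ holds for the $i$-indexed quantity exactly when it holds for the $j$-indexed one. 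Combining the three symmetric conditions gives $j \in S(i,t) \iff i \in S(j,t)$.

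I do not anticipate any real obstacle here; the corollary is essentially a bookkeeping consequence of Lemma \ref{estimates} together with the fact that the graph is undirected and the threshold/zero-message tests are visibly symmetric. The one point worth stating carefully is the double application of Lemma \ref{estimates} (once as written, once with $i,j$ swapped) to rewrite both terms inside the absolute value, and to note that these variables are defined precisely when $(i,j)$ has appeared in some past graph, which is guaranteed here since $(i,j) \in E(t)$.
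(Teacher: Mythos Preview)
Your proof is correct and follows essentially the same approach as the paper: both argue that the zero-message condition is symmetric by inspection and then use Lemma \ref{estimates} (applied with the roles of $i$ and $j$ swapped as needed) to rewrite $|\widehat{x}_{i,j,{\rm in}}(t) - \widehat{x}_{i,j,{\rm out}}(t)|$ as $|\widehat{x}_{j,i,{\rm out}}(t) - \widehat{x}_{j,i,{\rm in}}(t)|$. Your version is slightly more explicit about the neighbor condition and the double application of the lemma, but there is no substantive difference.
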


\bigskip

\begin{proof} Suppose $j \in S(i,t)$. This happens if and only if $q_{i \rightarrow j}(t)= q_{j \rightarrow i}(t) = 0$ and 
\begin{equation} \label{bigdifference} | \widehat{x}_{i,j, {\rm in}}(t) - \widehat{x}_{i,j, {\rm out}}(t)| > \frac{4}{t^{\alpha}}. \end{equation} The condition
$q_{i \rightarrow j}(t)= q_{j \rightarrow i}(t) = 0$ is naturally symmetric between $i$ and $j$. Moreover, using Lemma \ref{estimates}, we 
have that Eq. (\ref{bigdifference}) is equivalent to 
\[ | \widehat{x}_{j,i, {\rm out}}(t) - \widehat{x}_{j,i, {\rm in}}(t) | > \frac{4}{t^{\alpha}}, \] so that
$i \in S(j,t)$. 
\end{proof} 

\bigskip


We next observe that the update equations of our protocols may be rewritten in a convenient form:

\bigskip
 
\begin{lemma} \label{wbounds} Defining \[ w_{ij}(t-1) = \begin{cases}   0 & \mbox{ if } j \notin S(i,t) \\ \frac{\widehat{x}_{i,j, {\rm in}}(t) - \widehat{x}_{i,j, {\rm out}}(t)}{x_{j}(t-1) - x_i(t-1) }   & \mbox{
 else } \end{cases} \]  we then have that  
\begin{equation} \label{update} x_i(t) = x_i(t-1) + \frac{1}{t^{\beta}} \sum_{j \in S(i,t)} \frac{w_{ij}(t-1)}{ 4 D(i,j,t) } (x_j(t-1) - x_i(t-1)) \end{equation} and moreover \[ \frac{2}{3} \leq w_{ij}(t-1) \leq 2 \] for all $i,j$ such that $j \in S(i,t)$. 
\end{lemma}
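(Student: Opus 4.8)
The plan is to verify the identity (\ref{update}) by direct algebraic substitution into (\ref{mainiter2}), so that the only real content is (a) that the quantity $x_j(t-1) - x_i(t-1)$ appearing implicitly in the definition of $w_{ij}(t-1)$ is nonzero whenever $j \in S(i,t)$, making $w_{ij}(t-1)$ well defined, and (b) the two-sided bound $\tfrac{2}{3} \le w_{ij}(t-1) \le 2$. Both of these will drop out once I control how far the relevant estimates are from the true values on the indices $j \in S(i,t)$.

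First I would record the key consequence of $j \in S(i,t)$. Membership in $S(i,t)$ forces $q_{i \to j}(t) = q_{j \to i}(t) = 0$, so the definition of $R[\cdot]$ gives $|x_i(t-1) - \widehat{x}_{i,j,{\rm out}}(t-1)| \le 1/t^{\alpha}$ and $|x_j(t-1) - \widehat{x}_{j,i,{\rm out}}(t-1)| \le 1/t^{\alpha}$. Since moreover $q_{i\to j}(t)=0$ the variable $\widehat{x}_{i,j,{\rm out}}$ is unchanged on this step, and likewise $\widehat{x}_{j,i,{\rm out}}$ is unchanged since $q_{j\to i}(t)=0$; combining this with Lemma \ref{estimates} (which identifies $\widehat{x}_{j,i,{\rm out}}(t) = \widehat{x}_{i,j,{\rm in}}(t)$) yields
\[ |x_i(t-1) - \widehat{x}_{i,j,{\rm out}}(t)| \le \frac{1}{t^{\alpha}}, \qquad |x_j(t-1) - \widehat{x}_{i,j,{\rm in}}(t)| \le \frac{1}{t^{\alpha}}. \]

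Next, I would set $s = \widehat{x}_{i,j,{\rm in}}(t) - \widehat{x}_{i,j,{\rm out}}(t)$ and $\Delta = x_j(t-1) - x_i(t-1)$ and observe that $s = \Delta + e$, where $e = \big(\widehat{x}_{i,j,{\rm in}}(t) - x_j(t-1)\big) + \big(x_i(t-1) - \widehat{x}_{i,j,{\rm out}}(t)\big)$ satisfies $|e| \le 2/t^{\alpha}$ by the display above. Since $j \in S(i,t)$ also guarantees $|s| > 4/t^{\alpha}$, we get $|\Delta| \ge |s| - |e| > 2/t^{\alpha} > 0$, so $\Delta \neq 0$, the quotient $w_{ij}(t-1) = s/\Delta$ is well defined, and substituting $\widehat{x}_{i,j,{\rm in}}(t) - \widehat{x}_{i,j,{\rm out}}(t) = w_{ij}(t-1)\,(x_j(t-1) - x_i(t-1))$ into (\ref{mainiter2}) gives (\ref{update}). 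For the bounds, I would write $w_{ij}(t-1) = s/\Delta = 1/(1 - e/s)$ and use $|e/s| \le (2/t^{\alpha})/(4/t^{\alpha}) = 1/2$, so that $1 - e/s \in [1/2, 3/2]$ and hence $w_{ij}(t-1) \in [2/3, 2]$.

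There is essentially no obstacle here; the computation is routine bookkeeping with the $1/t^{\alpha}$ tolerances. The one point worth care is to bound $w_{ij}(t-1)$ by first expressing it as $1/(1 - e/s)$ — whose denominator is bounded away from $0$ via $|s| > 4/t^{\alpha}$ — rather than as $1 + e/\Delta$, which would only give the weaker range $(0,2)$ and miss the lower bound $2/3$.
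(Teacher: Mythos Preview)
Your proof is correct and follows essentially the same approach as the paper: both use $q_{i\to j}(t)=q_{j\to i}(t)=0$ together with Lemma~\ref{estimates} to obtain the two $1/t^{\alpha}$ estimates, then combine these with $|s|>4/t^{\alpha}$ to pin down the ratio. Your final step, writing $w_{ij}=1/(1-e/s)$ with $|e/s|<1/2$, is a slightly cleaner variant of the paper's computation (which instead bounds $\bigl||s|-|\Delta|\bigr|\le 2/t^{\alpha}$ and reads off $s/\Delta\in[4/6,4/2]$); your version has the minor advantage of making positivity of $w_{ij}$ immediate.
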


\bigskip

\begin{proof} The only thing that needs to be proven are the lower and upper bounds on $w_{ij}(t-1)$.  Now  if $j \in S(i,t)$  we have \begin{equation} \label{atleast4} |\widehat{x}_{i,j, {\rm in}}(t) - \widehat{x}_{i,j, {\rm out}}(t)| >  \frac{4}{t^{\alpha}} \end{equation} Moreover, we also have that $q_{i \rightarrow j}(t) =0$, which means that
\[ | \widehat{x}_{i,j, {\rm out}}(t) - x_i(t-1) | = | \widehat{x}_{i,j, {\rm out}}(t-1) - x_i(t-1) | \leq \frac{1}{t^{\alpha}} \] and similarly, $q_{j \rightarrow i}(t)=0$ which means \begin{eqnarray*}  |\widehat{x}_{i,j, {\rm in}}(t) - x_j(t-1)|  =
|\widehat{x}_{i,j, {\rm in}}(t-1) - x_j(t-1)| =  |\widehat{x}_{j,i, {\rm out}}(t-1) - x_j(t-1)|  & \leq &  \frac{1}{t^{\alpha}}.
\end{eqnarray*} where the second equality used Lemma \ref{estimates}. We therefore have 
\[ \left| ~ | \widehat{x}_{i,j, {\rm in}}(t) - \widehat{x}_{i,j, {\rm out}}(t) | - |x_j(t-1) - x_i(t-1)| ~ \right| \leq \frac{2}{t^{\alpha}} \] and
combining this with Eq. (\ref{atleast4}), 
\[ w_{ij}(t-1) = \frac{\widehat{x}_{i,j, {\rm in}}(t) - \widehat{x}_{i,j, {\rm out}}(t)}{x_{j}(t-1) - x_i(t-1) }   \in [ \frac{4}{6}, \frac{4}{2} ], \] which completes the proof.
\end{proof} 

\bigskip

\begin{remark} As a consequence of  Lemma \ref{estimates}, we have that $w_{ij}(t) = w_{ji}(t)$ for all $i,j,t$. \label{symw} 
\end{remark} 
 
\bigskip Our next lemma rewrites the update dynamics of our consensus protocol in matrix form.
\bigskip
 
\begin{lemma} We may rewrite the main update Eq. (\ref{mainiter2}) as  \label{convexrewriting} \[ x(t) = (1 - \frac{1}{t^{\beta}}) x(t-1) + \frac{1}{t^{\beta}} A(t-1) x(t-1), \] where $A(t-1)$ is a symmetric, stochastic, diagonally dominant matrix with the property that if $a_{ij}(t-1)$ is positive, then $a_{ij}(t-1) \geq 1/(8 D(i,j,t))$.  \label{symstoch}
\end{lemma}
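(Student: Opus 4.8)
The plan is to read the matrix $A(t-1)$ directly off the reformulation in Lemma \ref{wbounds} and then verify the four claimed properties in turn. Concretely, starting from Eq. (\ref{update}) I would set, for $i \neq j$,
\[ a_{ij}(t-1) = \begin{cases} \dfrac{w_{ij}(t-1)}{4 D(i,j,t)} & \mbox{ if } j \in S(i,t) \\ 0 & \mbox{ if } j \notin S(i,t), \end{cases} \]
noting that $i \notin S(i,t)$ since the quantities $\widehat{x}_{i,i,{\rm in}}, \widehat{x}_{i,i,{\rm out}}$ are never defined, and then define the diagonal by $a_{ii}(t-1) = 1 - \sum_{j \neq i} a_{ij}(t-1)$. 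A one-line manipulation — writing $\sum_{j \in S(i,t)} a_{ij}(t-1)(x_j(t-1) - x_i(t-1)) = \sum_{j \neq i} a_{ij}(t-1) x_j(t-1) - (1 - a_{ii}(t-1)) x_i(t-1) = (A(t-1) x(t-1))_i - x_i(t-1)$ — turns Eq. (\ref{update}) into exactly $x_i(t) = (1 - t^{-\beta}) x_i(t-1) + t^{-\beta} (A(t-1) x(t-1))_i$, which is the asserted identity.

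It then remains to check that $A(t-1)$ is symmetric, stochastic, diagonally dominant, and satisfies the lower bound on positive entries. Symmetry of the off-diagonal part is immediate from three facts already in hand: $D(i,j,t) = D(j,i,t)$ (the symmetry hypothesis on the $D$'s), $w_{ij}(t-1) = w_{ji}(t-1)$ (Remark \ref{symw}), and $j \in S(i,t) \Leftrightarrow i \in S(j,t)$ (Corollary \ref{symlinks}); the diagonal is symmetric trivially. Row sums equal $1$ by construction. Nonnegativity of the off-diagonal entries and the positive-entry bound both follow from the two-sided estimate $\tfrac23 \le w_{ij}(t-1) \le 2$ of Lemma \ref{wbounds}: each off-diagonal entry that is present is at least $\tfrac{2/3}{4 D(i,j,t)} = \tfrac{1}{6 D(i,j,t)} > 0$, which in particular gives $a_{ij}(t-1) \ge \tfrac{1}{8 D(i,j,t)}$ whenever $a_{ij}(t-1) > 0$.

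The one step requiring a small amount of care — and the only place an inequality could conceivably go wrong — is showing that the diagonal entries are nonnegative, i.e. that $\sum_{j \neq i} a_{ij}(t-1) \le 1$. For this I would bound, using $w_{ij}(t-1) \le 2$ and $D(i,j,t) \ge d_i(t)$,
\[ \sum_{j \neq i} a_{ij}(t-1) = \sum_{j \in S(i,t)} \frac{w_{ij}(t-1)}{4 D(i,j,t)} \le \sum_{j \in S(i,t)} \frac{1}{2 D(i,j,t)} \le \frac{|S(i,t)|}{2 d_i(t)} \le \frac12, \]
where the last inequality uses $|S(i,t)| \le d_i(t)$ since $S(i,t)$ consists of neighbors of $i$ in $G(t)$ (the bound being vacuous when $d_i(t) = 0$, in which case $S(i,t) = \emptyset$ and $a_{ii}(t-1) = 1$). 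Hence $a_{ii}(t-1) \ge \tfrac12$, which at once yields nonnegativity of the diagonal, diagonal dominance (all entries being nonnegative, $a_{ii}(t-1) \ge \tfrac12 \ge \sum_{j\ne i} a_{ij}(t-1)$), and — any $D$ being at least $1$ — the positive-entry lower bound for the diagonal too, should one read the claim as including $i=j$. I do not expect any genuine obstacle: the whole lemma is bookkeeping on top of Lemma \ref{estimates}, Lemma \ref{wbounds}, Remark \ref{symw}, and Corollary \ref{symlinks}.
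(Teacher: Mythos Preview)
Your proposal is correct and follows essentially the same approach as the paper: define the off-diagonal entries of $A(t-1)$ as $w_{ij}(t-1)/(4D(i,j,t))$, set the diagonal so rows sum to $1$, and invoke the bounds $\tfrac23 \le w_{ij}(t-1) \le 2$ from Lemma~\ref{wbounds} to get nonnegativity, the $1/(8D(i,j,t))$ lower bound, and $a_{ii}(t-1) \ge \tfrac12$. The paper is terser, citing only Remark~\ref{symw} for symmetry, whereas you spell out that $D(i,j,t)=D(j,i,t)$ and Corollary~\ref{symlinks} are also needed; this extra care is appropriate.
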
 

\bigskip

\begin{proof} From Eq. (\ref{update}), \begin{footnotesize}
\begin{eqnarray*} x_i(t) & = & \left( 1 - \frac{1}{t^{\beta}} \sum_{j \in S(i,t)} \frac{w_{ij}(t-1)}{ 4 D(i,j,t) } \right) x_i(t-1) +  \frac{1}{t^{\beta}}  \sum_{j \in S(i,t)} \frac{w_{ij}(t-1)}{ 4 D(i,j,t) } x_j(t-1) \\ 
& = & \left( 1 - \frac{1}{t^{\beta}} \right) x_i(t-1) + \frac{1}{t^{\beta}} \left( 1 - \sum_{j \in S(i,t)} \frac{w_{ij}(t-1)}{ 4 D(i,j,t) } \right) x_i(t-1) + \frac{1}{t^{\beta}}  \sum_{j \in S(i,t)} \frac{w_{ij}(t-1)}{ 4 D(i,j,t)} x_j(t-1)
\end{eqnarray*}  \end{footnotesize} We therefore define $A(t-1)$ in the natural way from the above equation by setting \[ a_{ij}(t-1) = \frac{w_{ij}(t-1)}{ 4 D(i,j,t)} \] for all $i \neq j$ and 
\[ a_{ii}(t-1) = 1 - \sum_{j \in S(i,t)} \frac{w_{ij}(t-1)}{ 4 D(i,j,t) }  \] By construction the rows add up to $1$. The off-diagonal entries are clearly nonnegative, and the matrix is diagonally dominant because Lemma \ref{wbounds} implies that  
\[ 1 - \sum_{j \in S(i,t)} \frac{w_{ij}(t-1)}{ 4 D(i,j,t)} \geq \frac{1}{2} \] This makes $A(t-1)$ nonnegative, stochastic, and diagonally dominant. Finally,  Remark \ref{symw} implies that $A(t-1)$ is symmetric. 
\end{proof}

\bigskip

The next lemma lists some natural Lyapunov functions for our protocol. 
\bigskip

\begin{lemma} $M(t)$ is nonincreasing, $m(t)$ is nondecreasing, and $V_2(x(t))$ is nonincreasing. \label{monotonicity}
\end{lemma}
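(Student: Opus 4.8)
The key is the matrix representation from Lemma~\ref{convexrewriting}, which writes the update as $x(t) = (1-t^{-\beta})x(t-1) + t^{-\beta} A(t-1)x(t-1)$ with $A(t-1)$ symmetric and stochastic. Set $P(t-1) = (1-t^{-\beta})I + t^{-\beta}A(t-1)$, so that $x(t) = P(t-1)x(t-1)$. I would first observe that $P(t-1)$ is itself a convex combination of the identity and a symmetric stochastic matrix, hence symmetric, stochastic, and nonnegative; in particular each row of $P(t-1)$ is a probability vector.

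For $M(t)$ and $m(t)$: since $x_i(t) = \sum_j p_{ij}(t-1) x_j(t-1)$ is a convex combination of the entries of $x(t-1)$, it lies between $m(t-1)$ and $M(t-1)$. Taking the max over $i$ gives $M(t) \le M(t-1)$, and taking the min gives $m(t) \ge m(t-1)$. This is the standard argument and requires only stochasticity and nonnegativity of $P(t-1)$, both of which follow immediately from Lemma~\ref{convexrewriting} and the fact that $1-t^{-\beta} \ge 0$ for $t \ge 1$ (using $\beta \in (0,1)$).

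For $V_2(x(t))$: here I would use that $P(t-1)$ is \emph{doubly} stochastic (it is symmetric and stochastic, so columns also sum to $1$). Consequently $P(t-1)$ fixes the all-ones vector $\1$ on both sides, so $\1^T x(t) = \1^T x(t-1)$, i.e.\ the average $\bar x := \frac1n \sum_j x_j$ is preserved; write $\bar x(t) = \bar x(t-1) =: \bar x$. Then $x(t) - \bar x \1 = P(t-1)(x(t-1) - \bar x \1)$, and $V_2(x(t)) = \|x(t) - \bar x\1\|_2 = \|P(t-1)(x(t-1)-\bar x\1)\|_2 \le \|P(t-1)\|_2 \, \|x(t-1) - \bar x\1\|_2$. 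Since $P(t-1)$ is symmetric and doubly stochastic, all its eigenvalues lie in $[-1,1]$ (Perron--Frobenius / Gershgorin), so its spectral norm is at most $1$, giving $V_2(x(t)) \le V_2(x(t-1))$.

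The only mild subtlety — the ``main obstacle,'' such as it is — is making sure $P(t-1)$ really is a genuine averaging matrix with spectral norm $\le 1$: one must invoke that $1-t^{-\beta}\ge 0$ so that the diagonal perturbation does not push an eigenvalue below $-1$, and that symmetry plus stochasticity yields double stochasticity. Both points are immediate from the hypotheses ($\beta<1$, $t\ge 1$) and Lemma~\ref{convexrewriting}, so the proof is short; I would simply state these three facts (convex combination bounds $M,m$; average preserved; spectral norm $\le 1$ bounds $V_2$) in sequence.
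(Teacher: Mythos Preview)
Your proposal is correct and follows essentially the same approach as the paper's own (extremely terse) proof: the paper simply remarks that the first two claims follow from the stochasticity of $A(t-1)$ and the last from its symmetry, citing \cite{noot09}. You have filled in precisely those details via the convex-combination bound for $M,m$ and the spectral-norm bound for $V_2$, so there is nothing to add.
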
 

\smallskip

\begin{proof} The first two claims follow immediately from the stochasticity of $A(t-1)$. The last one follows from the symmetry
of $A$ - see \cite{noot09} for a proof.  
\end{proof} 

\bigskip We will later require a bound on how large the estimates held by any node can be at any time. The following lemma upper bounds
this quantity in terms of a measure of the initial dispersion. 
\bigskip

\begin{lemma} For all $i,j,t$ such that $\widehat{x}_{i,j, {\rm in}}(t)$ is defined, \[ |\widehat{x}_{i,j, {\rm in}}(t)| \leq ||x(0)||_{\infty}. \] \label{estimatesbound}
\end{lemma}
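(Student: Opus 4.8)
The plan is to show the bound $|\widehat{x}_{i,j,{\rm in}}(t)| \le \|x(0)\|_\infty$ by induction on $t$, using the key structural fact that the quantizer $R[\cdot]$ only permits $\widehat{x}_{i,j,{\rm in}}$ to move toward $x_j$, never past it by more than the current step would already close. More precisely, recall that $\widehat{x}_{i,j,{\rm in}}(t)=\widehat{x}_{j,i,{\rm out}}(t)$ by Lemma \ref{estimates}, so it suffices to bound $\widehat{x}_{j,i,{\rm out}}(t)$, which is the estimate node $j$ maintains of its own value as seen by $i$. The defining recursion is $\widehat{x}_{j,i,{\rm out}}(t) = \widehat{x}_{j,i,{\rm out}}(t-1) + q_{j\rightarrow i}(t)/t^\alpha$ with $q_{j\rightarrow i}(t) = R[t^\alpha(x_j(t-1) - \widehat{x}_{j,i,{\rm out}}(t-1))]$ (and it stays fixed when the edge is absent). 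The crucial observation is: if $\widehat{x}_{j,i,{\rm out}}(t-1) \ge x_j(t-1)$ then $t^\alpha(x_j(t-1)-\widehat{x}_{j,i,{\rm out}}(t-1)) \le 0$, so $q_{j\rightarrow i}(t) \in \{-1,0\}$ and the estimate does not increase; symmetrically it does not decrease when it is below $x_j(t-1)$. Combined with the step size being exactly $1/t^\alpha$, one gets that after the update $\widehat{x}_{j,i,{\rm out}}(t)$ lies between $\min(x_j(t-1), \widehat{x}_{j,i,{\rm out}}(t-1))$ and $\max(x_j(t-1), \widehat{x}_{j,i,{\rm out}}(t-1))$ — that is, it never overshoots $x_j(t-1)$: if it was already on the far side it moves at most back toward $x_j(t-1)$, and if it was on the near side it moves by $1/t^\alpha$ toward $x_j(t-1)$ but only when it is more than $1/t^\alpha$ away, hence cannot cross.

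From this monotone-toward-target property the induction is straightforward. At initialization $\widehat{x}_{j,i,{\rm out}}=0$, and since $x_j(0)=x_j(0)$ is one of the initial values, $|0| \le \|x(0)\|_\infty$ provided $0$ lies in the interval $[m(0),M(0)]$ — which it need not in general, so instead I would phrase the invariant as $\widehat{x}_{j,i,{\rm out}}(t) \in [\min(0,m(0)), \max(0,M(0))]$, or more cleanly observe that since $m(t)$ is nondecreasing and $M(t)$ nonincreasing (Lemma \ref{monotonicity}), every $x_j(s)$ for $s\ge 0$ lies in $[m(0),M(0)]$; and the estimate, starting at $0$ and only ever moving toward some $x_j(s) \in [m(0), M(0)]$ without overshooting, stays within the convex hull of $\{0\} \cup [m(0),M(0)]$. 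This gives $|\widehat{x}_{j,i,{\rm out}}(t)| \le \max(|m(0)|, |M(0)|, 0) = \|x(0)\|_\infty$, which is exactly the claimed bound. Applying Lemma \ref{estimates} transfers the bound to $\widehat{x}_{i,j,{\rm in}}(t)$.

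The main obstacle — really the only delicate point — is making the "no overshoot" step rigorous in the boundary case where $\widehat{x}_{j,i,{\rm out}}(t-1)$ is between $1$ and... (in rescaled units) of $x_j(t-1)$: one must check that when $|t^\alpha(x_j(t-1)-\widehat{x}_{j,i,{\rm out}}(t-1))| \le 1$ the quantizer returns $0$ and nothing moves, whereas when the gap exceeds $1/t^\alpha$ the single step of size $1/t^\alpha$ strictly reduces the gap without flipping its sign. This is just a case analysis on the three branches of $R[\cdot]$ and the sign of $x_j(t-1)-\widehat{x}_{j,i,{\rm out}}(t-1)$, so I expect it to be short. One should also note explicitly that on steps where $(i,j)\notin E(t)$ the estimate is unchanged, so the invariant is trivially preserved there, and that $\|x(0)\|_\infty \ge \max(|m(0)|,|M(0)|)$ and $\|x(0)\|_\infty \ge 0$ so the convex hull of $\{0\}\cup[m(0),M(0)]$ is contained in $[-\|x(0)\|_\infty, \|x(0)\|_\infty]$.
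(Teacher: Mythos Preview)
Your proposal is correct and follows essentially the same approach as the paper: both arguments show that $\widehat{x}_{j,i,{\rm out}}(t)$ (equivalently $\widehat{x}_{i,j,{\rm in}}(t)$) lies in the convex hull of $0$ and the true values $x_j(s)$, then invoke Lemma~\ref{monotonicity} to confine those values to $[m(0),M(0)]\subset[-\|x(0)\|_\infty,\|x(0)\|_\infty]$. The paper compresses your ``no overshoot'' case analysis into the single phrase ``by induction, $\widehat{x}_{i,j,{\rm in}}(t)$ belongs to the convex hull of $0$ and $x_j(0),\ldots,x_j(t)$,'' but the underlying reasoning---that each update moves the estimate toward $x_j(t-1)$ without crossing it---is identical to what you spell out.
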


\smallskip

\begin{proof} By induction, $\widehat{x}_{i,j, {\rm in}}(t)$ belongs to the convex hull 
of $0$ and $x_j(0), x_j(1), x_j(2), \ldots, x_j(t)$. Consequently, its absolute value cannot be larger 
than $\max(|x_j(0)|, |x_j(1)|, \ldots, |x_j(t)|)$. By Lemma \ref{monotonicity}, this is at most $||x(0)||_{\infty}$. 
\end{proof}

\bigskip

We will also need the following observation: there is an upper bound on how much each node moves every step, and that upper bound
decays to zero with time as $1/t^{\beta}$. This is stated formally in the next lemma. 

\bigskip

\begin{lemma} \label{incbound} \[ |x_j(t) - x_j(t-1) | \leq \frac{(1/2) W(0)}{t^{\beta}} \]
\end{lemma}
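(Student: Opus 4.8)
The plan is to bound the magnitude of the increment $x_j(t)-x_j(t-1)$ directly from the rewritten update in Lemma \ref{convexrewriting}. From that lemma we have
\[ x_j(t) - x_j(t-1) = \frac{1}{t^{\beta}} \left( (A(t-1) x(t-1))_j - x_j(t-1) \right), \]
so it suffices to show that $|(A(t-1)x(t-1))_j - x_j(t-1)| \leq (1/2) W(0)$. Since $A(t-1)$ is stochastic, the $j$-th entry $(A(t-1)x(t-1))_j$ is a convex combination of the entries $x_k(t-1)$, hence lies in the interval $[m(t-1), M(t-1)]$, and of course so does $x_j(t-1)$; therefore the difference is at most $M(t-1) - m(t-1) = W(t-1)$ in absolute value.

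It then remains to bound $W(t-1)$ by $W(0)/2$... which is not quite right: actually $W(t-1) \leq W(0)$ by Lemma \ref{monotonicity} (since $M(t)$ is nonincreasing and $m(t)$ is nondecreasing), giving the cruder bound $|x_j(t)-x_j(t-1)| \leq W(0)/t^{\beta}$. To recover the extra factor of $1/2$, I would argue more carefully: the quantity $(A(t-1)x(t-1))_j$ is a convex combination of the $x_k(t-1)$ in which $x_j(t-1)$ itself receives weight $a_{jj}(t-1) \geq 1/2$ (this diagonal-dominance bound is exactly what is established inside the proof of Lemma \ref{convexrewriting}). Consequently
\[ (A(t-1)x(t-1))_j - x_j(t-1) = \sum_{k \neq j} a_{jk}(t-1)\,(x_k(t-1) - x_j(t-1)), \]
and since $\sum_{k\neq j} a_{jk}(t-1) = 1 - a_{jj}(t-1) \leq 1/2$ while each $|x_k(t-1) - x_j(t-1)| \leq W(t-1) \leq W(0)$, the sum is bounded in absolute value by $(1/2) W(0)$. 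Dividing by $t^{\beta}$ yields the claim.

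The only mild subtlety — really the ``main obstacle,'' though it is minor — is making sure the weight on the diagonal is genuinely at least $1/2$ even when $S(j,t)$ is empty or when some $x_k(t-1) = x_j(t-1)$ (in which case $w_{jk}$ is defined by the first case of Lemma \ref{wbounds} and contributes nothing, so the bound only improves); in all cases the inequality $a_{jj}(t-1) \geq 1/2$ from Lemma \ref{convexrewriting} holds. With that in hand the proof is a two-line estimate, so I would simply write it out as above.
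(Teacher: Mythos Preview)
Your proof is correct and is essentially the paper's argument phrased through Lemma \ref{convexrewriting} rather than directly through Eq.~(\ref{update}): both use that the off-diagonal weights sum to at most $1/2$ (equivalently $a_{jj}(t-1)\geq 1/2$) together with $W(t-1)\leq W(0)$ from Lemma \ref{monotonicity}. The paper's proof is just the terse two-line version of what you wrote.
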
 
  
	\bigskip
	
	\begin{proof} Lemma \ref{monotonicity} implies that for any $t$, \[ \max_{k,l} |x_k(t) - x_l(t)| \leq W(0), \] and then the current lemma follows
	from Eq. (\ref{update}). 	\end{proof}
	
	\bigskip Having recorded a number of facts about our protocol in the previous sequence of lemmas, we now turn to arguing that estimates on 
	core edges get accurate over time. Our starting point is next lemma which makes the following observation: if $q_{j \rightarrow i}(t) \in \{-1,1\}$ and $t$
	is large enough, then
	the quantity $|\widehat{x}_{i,j,{\rm in}}(k) - x_j(k-1)|$, which may be thought of as an ``estimation error'' between the estimate $\widehat{x}_{i,j, {\rm in}}(k)$
	and the true value $x_j(k-1)$, decreases from step $k=t-1$ to step $k=t$ by a multiple of $1/t^{\alpha}$. 
	
	\bigskip
	
	\begin{lemma} If $q_{j \rightarrow i}(t) \in \{-1,1\}$ and $t \geq 1 + (8 W(0))^{\frac{1}{\beta - \alpha}}$ and $t>1$ then 
	\[  |\widehat{x}_{i,j, {\rm in}}(t) - x_j(t-1)|  \leq |\widehat{x}_{i,j, {\rm in}}(t-1) - x_j(t-2)| - \frac{7/8}{t^{\alpha}}  \] \label{decbound}
	\end{lemma}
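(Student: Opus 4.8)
The plan is to turn the one-step change in the ``estimation error'' $|\widehat{x}_{i,j,{\rm in}}(k)-x_j(k-1)|$ into an exact identity using the message rule together with the update for $\widehat{x}_{i,j,{\rm in}}$, and then absorb the discrepancy between $x_j(t-1)$ and $x_j(t-2)$ on the right-hand side using the per-step motion bound of Lemma~\ref{incbound}. First I would use Lemma~\ref{estimates} to rewrite the message as $q_{j\to i}(t) = R\bigl[\,t^\alpha\bigl(x_j(t-1)-\widehat{x}_{i,j,{\rm in}}(t-1)\bigr)\bigr]$, since $\widehat{x}_{j,i,{\rm out}}(t-1)=\widehat{x}_{i,j,{\rm in}}(t-1)$. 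If $q_{j\to i}(t)=1$ then $x_j(t-1)-\widehat{x}_{i,j,{\rm in}}(t-1) > 1/t^\alpha$, and the update $\widehat{x}_{i,j,{\rm in}}(t)=\widehat{x}_{i,j,{\rm in}}(t-1)+1/t^\alpha$ keeps $x_j(t-1)-\widehat{x}_{i,j,{\rm in}}(t)$ nonnegative and equal to $|\widehat{x}_{i,j,{\rm in}}(t-1)-x_j(t-1)|-1/t^\alpha$; the case $q_{j\to i}(t)=-1$ is identical with signs reversed. So in both cases one obtains the exact identity
\[ |\widehat{x}_{i,j,{\rm in}}(t)-x_j(t-1)| = |\widehat{x}_{i,j,{\rm in}}(t-1)-x_j(t-1)| - \frac{1}{t^\alpha}. \]

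Next I would shift the time index on the right. Since $t>1$ forces $t\ge 2$, Lemma~\ref{incbound} applies at time $t-1$ and gives, via the triangle inequality, $|\widehat{x}_{i,j,{\rm in}}(t-1)-x_j(t-1)| \le |\widehat{x}_{i,j,{\rm in}}(t-1)-x_j(t-2)| + \frac{(1/2)W(0)}{(t-1)^\beta}$. Substituting this into the identity above, it remains to check the numerical inequality $\frac{(1/2)W(0)}{(t-1)^\beta} \le \frac{1/8}{t^\alpha}$, equivalently $4W(0)\,t^\alpha \le (t-1)^\beta$. The hypothesis $t\ge 1+(8W(0))^{1/(\beta-\alpha)}$ gives $(t-1)^{\beta-\alpha}\ge 8W(0)$, hence $(t-1)^\beta \ge 8W(0)(t-1)^\alpha$; and $t\ge 2$ gives $t\le 2(t-1)$, so $t^\alpha \le 2(t-1)^\alpha$. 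Multiplying yields $4W(0)\,t^\alpha \le 8W(0)(t-1)^\alpha \le (t-1)^\beta$, which finishes the argument (the degenerate case $W(0)=0$ being immediate).

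I do not anticipate a real difficulty here; the only delicate point is the index bookkeeping — the lemma compares $\widehat{x}_{i,j,{\rm in}}(t)$ with $x_j(t-1)$ but $\widehat{x}_{i,j,{\rm in}}(t-1)$ with $x_j(t-2)$, so a full step of node $j$'s motion has to be charged against the decrease $1/t^\alpha$. This is precisely what the gap between the coefficient $1$ in the exact identity and the target coefficient $7/8$ pays for, and the threshold $1+(8W(0))^{1/(\beta-\alpha)}$ is calibrated exactly so that, after the crude bound $t^\alpha\le 2(t-1)^\alpha$, the drift term $\frac{(1/2)W(0)}{(t-1)^\beta}$ stays below $\frac{1/8}{t^\alpha}$.
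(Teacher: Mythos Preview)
Your proof is correct and follows essentially the same route as the paper: establish the exact identity $|\widehat{x}_{i,j,{\rm in}}(t)-x_j(t-1)| = |\widehat{x}_{i,j,{\rm in}}(t-1)-x_j(t-1)| - 1/t^{\alpha}$ from the message rule, insert the drift $|x_j(t-1)-x_j(t-2)|\le (1/2)W(0)/(t-1)^{\beta}$ via Lemma~\ref{incbound}, and then use the threshold on $t$ together with $t^{\alpha}\le 2(t-1)^{\alpha}$ to bound the drift by $1/(8t^{\alpha})$. The only cosmetic difference is that the paper first converts the drift to $\frac{1/16}{(t-1)^{\alpha}}$ and then passes to $t^{\alpha}$, whereas you combine the two steps; your explicit invocation of Lemma~\ref{estimates} to justify the identity is also a welcome clarification.
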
 
	
	\bigskip
	
	\begin{proof} Indeed, under the assumption $q_{j \rightarrow i}(t) \in \{-1,1\}$ we have  \begin{eqnarray*} |\widehat{x}_{i,j, {\rm in}}(t) - x_j(t-1)| & = & |\widehat{x}_{i,j, {\rm in}}(t-1) - x_j(t-1)|  - \frac{1}{t^{\alpha}} \\ 
& \leq & |\widehat{x}_{i,j, {\rm in}}(t-1) - x_j(t-2)| + |x_j(t-1) - x_j(t-2)|  - \frac{1}{t^{\alpha}} \\ 
& \leq &  |\widehat{x}_{i,j, {\rm in}}(t-1) - x_j(t-2)|  + \frac{(1/2) W(0)}{(t-1)^{\beta}} - \frac{1}{t^{\alpha}} 
 \end{eqnarray*}  where the last step used Lemma \ref{incbound}. Now because we are assuming $t$ is large enough so that $t \geq 1+(8W(0))^{\frac{1}{\beta-\alpha}}$ we will then have the upper bound $W(0) \leq (1/8) (t-1)^{\beta-\alpha}$  which allows us to eliminate
 $W(0)$ from the above equation: 
 \[ |\widehat{x}_{i,j, {\rm in}}(t) - x_j(t-1)|  \leq |\widehat{x}_{i,j, {\rm in}}(t-1) - x_j(t-2)| + \frac{1/16}{(t-1)^{\alpha}} - \frac{1}{t^{\alpha}} \leq  |\widehat{x}_{i,j, {\rm in}}(t-1) - x_j(t-2)| - \frac{7/8}{t^{\alpha}} \] where we needed that $t>1$ for the final inequality.  
	\end{proof} 
  
  \bigskip We now proceed to our first substantial lemma, which proves that after a transient period the ``estimation error'' $|\widehat{x}_{i, j, {\rm in}}(t) - x_j(t-1)|$ across core links $(i,j)$ decays to zero at a rate of $O(1/t^{\alpha})$.

\bigskip

\begin{lemma} \label{correctestimate} If $$t \geq  2^\frac{2}{1-\alpha} \lceil ( 32B + 8 B W(0) )^{\frac{1}{\beta-\alpha}} \rceil + \left( 32 B ||x(0)||_{\infty}\right)^{\frac{2}{1-\alpha}} $$ then
\[ | \widehat{x}_{i,j, {\rm in}}(t) - x_j(t-1)| \leq \frac{9/8}{(t-2B-1)^{\alpha}} \] for every pair of nodes $i,j$ such that the edge $(i,j)$ is
a core edge. 
\end{lemma}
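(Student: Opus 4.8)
The plan is to track the ``estimation error'' $e_{ij}(t) := |\widehat{x}_{i,j,{\rm in}}(t) - x_j(t-1)|$ across a core edge $(i,j)$ and show that once $t$ is past the stated transient threshold, this error is pinned near $O(1/t^\alpha)$ and cannot drift far from it. The key dichotomy comes from the message rule: at any time $t$ with $(i,j) \in E(t)$, node $j$ sends $q_{j\to i}(t) = R[t^\alpha(x_j(t-1) - \widehat{x}_{j,i,{\rm out}}(t-1))]$, and by Lemma \ref{estimates} this equals $R[t^\alpha(x_j(t-1) - \widehat{x}_{i,j,{\rm in}}(t-1))]$. So if $q_{j\to i}(t) = 0$ then $|\widehat{x}_{i,j,{\rm in}}(t-1) - x_j(t-1)| \le 1/t^\alpha$, and since $\widehat{x}_{i,j,{\rm in}}(t) = \widehat{x}_{i,j,{\rm in}}(t-1)$ in that case, we get $e_{ij}(t) = |\widehat{x}_{i,j,{\rm in}}(t-1) - x_j(t-1)| \le 1/t^\alpha$ immediately. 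If instead $q_{j\to i}(t) \in \{-1,1\}$, then Lemma \ref{decbound} applies (once $t \ge 1 + (8W(0))^{1/(\beta-\alpha)}$, which the threshold in the statement subsumes), giving $e_{ij}(t) \le e_{ij}(t-1) - \tfrac{7/8}{t^\alpha}$: a genuine decrease. Finally, on steps where $(i,j) \notin E(t)$, node $i$ freezes $\widehat{x}_{i,j,{\rm in}}$, so $e_{ij}(t) \le e_{ij}(t-1) + |x_j(t-1) - x_j(t-2)| \le e_{ij}(t-1) + \tfrac{(1/2)W(0)}{(t-1)^\beta}$ by Lemma \ref{incbound}; past the transient this increment is much smaller than $1/t^\alpha$ because $\beta > \alpha$.

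The core-edge hypothesis enters exactly here: since $(i,j)$ is a core edge, it appears at least once in every block $[kB+1,(k+1)B]$, so between any two consecutive appearances at most $B$ ``frozen'' steps elapse, and on those steps the error grows by at most $B$ times the per-step bound $\tfrac{(1/2)W(0)}{(t-1)^\beta}$. I would first establish a crude starting bound: using Lemma \ref{estimatesbound} we have $|\widehat{x}_{i,j,{\rm in}}(t)| \le \|x(0)\|_\infty$ and $|x_j(t-1)| \le \|x(0)\|_\infty$ by Lemma \ref{monotonicity}, so $e_{ij}(t) \le 2\|x(0)\|_\infty$ always — this is why the term $(32B\|x(0)\|_\infty)^{2/(1-\alpha)}$ appears in the threshold: it is the time by which a single ``big decrease'' of size $\approx 7/(8 t^\alpha)$ accumulated over a block, net of the block's growth, would already have driven the error below the target level if the error were ever that large. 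I would make this precise by a ``first entry, then stays'' argument: pick the threshold time $T^*$ from the statement; show that by time $T^*$ we must have had $e_{ij}(\tau) \le \tfrac{1}{\tau^\alpha} + (\text{small block drift})$ at some recent $\tau$ (because if $q_{j\to i} \equiv \pm 1$ over a whole block the error drops by roughly a block's worth of $7/(8t^\alpha)$, and it cannot keep dropping forever since it is nonnegative; so a zero message, hence the $1/t^\alpha$ bound, must occur); then propagate forward, showing that over the next block the error can grow by at most $\approx B \cdot \tfrac{(1/2)W(0)}{t^\beta}$ plus the possibility of re-entering the $\pm 1$ regime, which only decreases it. Summing the geometric-ish tail $\sum_{s \ge t} \tfrac{B W(0)}{s^\beta}$-type quantities against $\tfrac{1}{t^\alpha}$ and using $\beta > \alpha$ to absorb them gives the clean bound $\tfrac{9/8}{(t-2B-1)^\alpha}$, where the shift by $2B$ accounts for the at-most-$2B$ stale steps (one block before the last good message, one block after) and the fudge from $7/8$ up to $9/8$ absorbs the block drift.

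The main obstacle I expect is the bookkeeping in the forward-propagation step: carefully bounding the accumulated drift over a variable number (at most $B$) of frozen steps while $t^\alpha$ is changing, and choosing the constants ($32$, $9/8$, the $2B$ shift) so that the recursion closes — i.e., proving that ``$e_{ij}(t) \le \tfrac{9/8}{(t-2B-1)^\alpha}$ at the start of a block'' implies the same inequality at the start of the next block, rather than a slowly degrading constant. The delicate point is that within a block the error can both grow (frozen/non-edge steps) and shrink (edge steps with $\pm 1$) or be re-pinned (edge steps with $0$); I would handle this by a case split on whether a zero message occurs somewhere in the block — if yes, $e_{ij}$ is re-pinned to $\le 1/t^\alpha$ at that point and then drifts at most $B$ steps; if no, every edge appearance in the block is a $\pm 1$ step and Lemma \ref{decbound} forces a strict decrease that dominates the at-most-$B$ frozen-step growth, so the bound is maintained with room to spare. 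The requirement $t \ge 2^{2/(1-\alpha)}\lceil(32B+8BW(0))^{1/(\beta-\alpha)}\rceil$ is what guarantees that for all $t$ past the threshold the per-step frozen growth $\tfrac{(1/2)W(0)}{t^\beta}$ is at most something like $\tfrac{1}{16 B t^\alpha}$, so that $B$ such steps cost less than $\tfrac{1}{16 t^\alpha}$ — comfortably less than the $7/8$ buffer in Lemma \ref{decbound} and the $1/8$ slack between $9/8$ and $1$. Once those inequalities are verified, the argument is a routine induction on blocks.
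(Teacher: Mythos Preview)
Your plan is correct and identifies all the essential ingredients the paper uses: the trichotomy of behaviors (pin to $1/t^\alpha$ on a zero message, decrease by $(7/8)/t^\alpha$ on a $\pm 1$ message via Lemma~\ref{decbound}, drift by $(1/2)W(0)/(t-1)^\beta$ on a non-edge step via Lemma~\ref{incbound}), the crude a~priori bound $e_{ij}(t)\le 2\|x(0)\|_\infty$ from Lemmas~\ref{estimatesbound} and~\ref{monotonicity}, the core-edge guarantee of at least one appearance per $B$-block, and the role of the two threshold terms.

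The organization differs from the paper's. You propose a \emph{forward block-by-block induction} (``first entry, then stays''): argue that a zero message must eventually occur (else the error would be driven negative), then inductively maintain the bound over successive blocks by case-splitting on whether a zero message appears in each block. The paper instead does a single \emph{backward case analysis} at time $t$: it fixes $z \approx (32B+8BW(0))^{1/(\beta-\alpha)}$, looks for the last time $t'\in[z,t]$ with $q_{j\to i}(t')=0$, and splits into (i) no such $t'$, (ii) $t'<t-2B$, (iii) $t'\ge t-2B$. In cases (i)--(ii) it sums the increments over $[z,t]$ (resp.\ $[t',t]$), converting the sum over edge-appearances into $\tfrac{1}{8B}\sum_k k^{-\alpha}$ via the core-edge density; then it uses the integral bound $\sum_a^b k^{-\alpha}\ge \tfrac{b^{1-\alpha}-a^{1-\alpha}}{1-\alpha}$ together with the factorization $b^{1-\alpha}-a^{1-\alpha}=(b^{(1-\alpha)/2}+a^{(1-\alpha)/2})(b^{(1-\alpha)/2}-a^{(1-\alpha)/2})$ to show case~(i) gives a nonpositive upper bound and case~(ii) is monotone in $t'$ (so maximized at $t'=t-2B-1$), whence everything reduces to case~(iii). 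This explains exactly where the exponent $2/(1-\alpha)$ in $(32B\|x(0)\|_\infty)^{2/(1-\alpha)}$ comes from, which your plan leaves slightly heuristic.

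Both routes work. The paper's single case split avoids setting up and closing an induction with a moving target $9/8/(t-2B-1)^\alpha$, which is the bookkeeping you flagged as the main obstacle; conversely, your induction makes the ``pin then drift at most $2B$ steps'' intuition for the $9/8$ and the $2B$ shift more transparent.
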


\bigskip

\begin{proof} Let $\Lambda$ be the set of times when the core edge $(i,j)$ appears in the graph sequence $G(t)$.   Suppose $z = \lceil (32B + 8 B W(0))^\frac{1}{\beta-\alpha} \rceil-1$ so that in particular $z \geq 1 + (8 B W(0))^\frac{1}{\beta-\alpha}$ and Lemma \ref{decbound} is applicable at all times in $[z,t]$. 

Observe that since $(i,j)$ is a core edge and since $t-z \geq 2B$, the intersection of $\Lambda$ and $[z,t]$ is nonempty. We consider the messages sent across the edge $(i,j)$ to node $i$ and in particular we consider the last time when $q_{j \rightarrow i}(k)=0$ during the interval $k \in [z,t]$. There are three posibilities:  (i) $q_{j \rightarrow i}(k) \in \{-1,1\}$ for all $t \in \Lambda \cap [z,t]$. (ii) $q_{j \rightarrow i}(t') = 0$ for at least 
one $t' \in \Lambda \cap [z,t]$ but the last such $t'$ is less than $t-2B$. (iii) $q_{i \rightarrow j}(t') = 0$ for some $t' \geq t-2B$.

 We will derive an upper bound on the ``estimation error'' $|\widehat{x}_{i,j, {\rm in}}(t) - x_j(t-1)|$ in each of these three cases and put the three inequalities together later. 

\bigskip

\noindent {\bf Case (i):} We have that by Lemma \ref{decbound}, $| \widehat{x}_{i,j,{\rm in}}(k) - x_j(k-1)|$ decreases by at least $(7/8)/k^{\alpha}$ from 
$| \widehat{x}_{i,j, {\rm in}}(k-1) - x_j(k-2)|$ whenever $(i,j)$ appears in $G(k)$; moreover, by Lemma 
\ref{incbound} the same quantity increases by at most $(1/2) W(0)/(k-1)^{\beta}$ whenever $(i,j)$ does not appear in the graph sequence. Since by Lemmas \ref{monotonicity} and \ref{estimatesbound},
$$| \widehat{x}_{i,j,{\rm in}}(z) - x_j(z-1)| \leq 2 ||x(0)||_{\infty}$$ we can consequently conclude that
\begin{equation} \label{firstb} |\widehat{x}_{i,j, {\rm in}}(t) - x_j(t-1)| \leq 2 ||x(0)||_{\infty}  + \sum_{k=z+1}^t \frac{(1/2) W(0)}{(k-1)^{\beta}}  - \sum_{k \in \Lambda \cap [z+1,t]}  \frac{7/8}{k^{\alpha}}. \end{equation} We manipulate this expression by invoking  the assumption that every core edge appears in each block of times $[kB+1, (k+1)B]$. This means that because $\Lambda$ has nonzero intersection with each $[kB+1,(k+1)B]$,  we can infer
 that for any $[a,b]$ such that $a \geq 4B$  and $b-a \geq 2B$ we have\footnote{This inequality follows by some elementary manipulations, which we spell out in this footnote so as to avoid interrupting the flow of the proof. 
 Indeed, note that every interval of length $2B$ or more has at least one element from $\Lambda$. Moreover, since $1/k^{\alpha}$ is a decreasing function of $k$, we have that  $$ \sum_{k \in \Lambda \cap [a,b] } \frac{1}{k^{\alpha}} \geq \sum_{k \geq 1 \mbox{ such that } a + k 2B \leq b} \frac{1}{(a+k2B)^{\alpha}} $$ Now since $a + 2 B \leq (3/2) a$ (due to the assumption $a \geq 4B$), we have that $1/(a+2B)^{\alpha} \geq 1/(1.5 \cdot a)^{\alpha}$ so that 
 $$\sum_{k \in \Lambda \cap [a,b] } \frac{1}{k^{\alpha}} \geq  \left( \frac{1}{2} \frac{1}{(a+2B)^{\alpha}} + \frac{1}{2} \frac{1}{(a+2B)^{\alpha}}  + \sum_{k \geq 2 \mbox{ such that } a + k 2B \leq b}^b \frac{1}{k^{\alpha}} \right) \geq   \sum_{k \geq 0 \mbox{ such that } a + k 2B \leq b} \frac{1}{2 \cdot 1.5^{\alpha} k^{\alpha}} $$ which 
 implies $$ \sum_{k \in \Lambda \cap [a,b]} \frac{1}{k^{\alpha}} \geq \frac{1}{2B} \sum_{k=a}^b \frac{1/3}{k^{\alpha}}.$$ }: 
 \[ \sum_{k \in \Lambda \cap [a,b] } \frac{7/8}{k^{\alpha}}  \geq \frac{1}{8B} \sum_{ k = a}^b \frac{1}{k^{\alpha}}. \] 
Plugging this into Eq. (\ref{firstb}) (which we can do since $z+1 \geq 4B$ and $t \geq z+1+2B$),  
\[ |\widehat{x}_{i,j, {\rm in}}(t) - x_j(t-1)| \leq 2 ||x(0)||_{\infty}  + \sum_{k=z+1}^t \frac{(1/2) W(0)}{(k-1)^{\beta}}  - \sum_{k=z+1}^t  \frac{1/(8B)}{k^{\alpha}} \]
Now since the inequality $z \geq 1 + (8B W(0))^{\frac{1}{\beta - \alpha}}$ implies that $W(0) \leq  z^{\beta - \alpha}/(8B) \leq (k-1)^{\beta - \alpha}/(8B)$ for all $k \in [z+1,t]$, we have: 

\begin{eqnarray*} |\widehat{x}_{i,j, {\rm in}}(t) - x_j(t-1)| &  \leq  &  2 ||x(0)||_{\infty} + \sum_{k=z+1}^t \frac{1}{16 B (k-1)^{\alpha} } - \sum_{k=z+1}^t \frac{1}{8 B k^{\alpha}} \\
& \leq & 2 ||x(0)||_{\infty} + \frac{1}{16 B z^{\alpha}}  - \sum_{k=z+1}^t  \frac{1/(16B)}{k^{\alpha}} \\
& \leq & 2 ||x(0)||_{\infty} + \frac{1}{16 B}  - \sum_{k=z+1}^t  \frac{1/(16B)}{k^{\alpha}}. 
\end{eqnarray*}

\smallskip 

\noindent {\bf Case (ii):} The only difference from case $(i)$ is that rather than bounding the change in $ |\widehat{x}_{i,j, {\rm in}}(k) - x_j(k-1)|$ 
from time $z$ to time $t$, we bound it on from time $t'$ to time $t$. Indeed, since $q_{j \rightarrow i}(t')=0$, we have that 
\[ |\widehat{x}_{i,j, {\rm in}}(t') - x_j(t'-1)| = |\widehat{x}_{i,j, {\rm in}}(t'-1) - x_j(t'-1)| \leq \frac{1}{(t')^{\alpha}} \] and 
consequently 
\[ | \widehat{x}_{i,j, {\rm in}}(t) - x_j(t-1)| \leq \frac{1}{(t')^{\alpha}} + \sum_{k = t'+1}^t \frac{(1/2) W(0)}{(k-1)^{\beta}} - \sum_{k \in \Lambda \cap [t'+1,t]} \frac{7/8}{k^{\alpha}} \] Since $t' \geq z \geq 4B$ and $t-t' \geq 2B$, we can apply an identical argument as in case (i) to obtain
\[ |\widehat{x}_{i,j, {\rm in}}(t) - x_j(t-1)| \leq \frac{1+1/(16B)}{(t')^{\alpha}}  - \sum_{k=t'+1}^t  \frac{1/(16B)}{k^{\alpha}}. \]

\smallskip 

\noindent {\bf Case (iii):} We proceed as in case (ii) by bounding the change in $ |\widehat{x}_{i,j, {\rm in}}(k) - x_j(k-1)|$  from time $t'$ to time $t$. In this case, however, we rely on the fact that $t - t' \leq 2B$, so that a straightforward application of Lemma \ref{incbound} as well as the bound $W(0) \leq (1/(8B)) (t-2B)^{\beta - \alpha}$ gives: 
\[ |\widehat{x}_{i,j, {\rm in}}(t) - x_j(t-1)| \leq \frac{1}{(t-2B)^{\alpha}} + 2B \frac{(1/2)W(0)}{(t-2B)^{\beta}} \leq \frac{9/8}{(t-2B)^{\alpha}}. \] 

\smallskip

Putting it all together, we have:  \begin{footnotesize} \begin{equation} \label{withsumbound}
 |\widehat{x}_{i,j, {\rm in}}(t) - x_j(t-1)| \leq  \max \left( 2 ||x(0)||_{\infty} + \frac{1}{16B} - \sum_{k=  z+1}^t \frac{1/16}{B k^{\alpha}},~~ \max_{t'=  z, \ldots,t-2B-1} \frac{17/16}{{t'}^{\alpha}} - \sum_{k=t'+1}^t \frac{1/16}{B k^{\alpha}}, ~~\frac{9/8}{(t-2B)^{\alpha}} \right) \end{equation} \end{footnotesize} Our next step is to lower bound the sums appearing on the right-hand side of the above equation. Using the standard method of lower-bounding the sum of a nonincreasing function by an integral, we have
 \begin{equation} \label{sqrtbound}  \sum_{k=a}^{b} \frac{1}{k^{\alpha}} \geq \frac{ {(b+1)^{1-\alpha}-a^{1-\alpha}} }{1-\alpha} = \frac{((b+1)^{\frac{1-\alpha}{2}}+a^{\frac{1-\alpha}{2}})((b+1)^{\frac{1-\alpha}{2}}-a^{\frac{1-\alpha}{2}})}{1-\alpha}. \end{equation} Now the lower bound on $t$ we have assumed in the statement of this lemma implies that $$t^{(1-\alpha)/2} - (z+1)^{(1-\alpha)/2} \geq 2 \left( \lceil (32B + 8 B W(0))^\frac{1}{\beta-\alpha} \rceil \right)^{\frac{1-\alpha}{2}}  - \left( \lceil (32B + 8 B W(0))^\frac{1}{\beta-\alpha} \rceil \right)^{\frac{1-\alpha}{2}} \geq 1.$$ Moreover, $t^{(1-\alpha)/2} + (z+1)^{(1-\alpha)/2} \geq 32 B ||x(0)||_{\infty} + 1$. Plugging these two inequalities into Eq. (\ref{sqrtbound}) we get  that the first term in the maximization of Eq. (\ref{withsumbound}) is nonpositive. 

As for the second term 
 in the maximization, it is therefore bounded as 
 \[     \max_{t'= z, \ldots,t-2B-1} \frac{17/16}{(t')^{\alpha}} - \frac{1}{16B} \frac{((t+1)^{1-\alpha} - (t'+1)^{1-\alpha})}{1-\alpha}  \] and since $t' \geq 32B$ for all the values of $t'$ over which we are maximizing, the expression inside the maximum is 
 an increasing function\footnote{Indeed, if $f(t) = c/t^{\alpha} + d (t+1)^{1-\alpha}$ where $\alpha \in (0,1)$  then $$f'(t) = - \alpha c t^{-\alpha-1} + d (1-\alpha) (t+1)^{-\alpha}.$$ The latter expression is nonnegative whenever $\frac{d (1-\alpha)}{c \alpha} \frac{t^{\alpha+1}}{(t+1)^{\alpha}} \geq 1$. Now in the
above expression, $c=17/16, d= 1/(16B(1-\alpha))$ so we must verify that $$ \frac{1}{17B \alpha} t \left( \frac{t}{t+1} \right)^{\alpha} \geq 1. $$  This holds
if $t \geq 32B$ because $(32/17)(32/33) \geq 1.$} of $t'$.   Consequently, we may choose $t'=t-2B-1$ in the second term, which immediately implies the current lemma. 
\end{proof}

\bigskip

We next demonstrate a consequence of the previous lemma: if $t$ is large enough and the message $q_{i \rightarrow j}$ on a core
edge is nonzero at 
some time, then the same message $q_{i \rightarrow j}$ will be zero for a substantial amount of time afterwards whenever the edge $(i,j)$ appears. Intuitively,
this lemma says that messages across a core edge cannot be in $\{-1,1\}$ almost always: at least a constant fraction of the time, a message across
a core edge should equal zero. 
\bigskip

\begin{lemma} \label{messagesexchangepre} Suppose  $$t \geq  2^{\frac{2}{1-\alpha}} \lceil 32B + 8 W(0)  \rceil^{\frac{1}{\beta-\alpha}} + \left( 32 B ||x(0)||_{\infty} \right)^{\frac{2}{1-\alpha}} + 1$$ and the edge $(i,j)$ is a core edge. If 
$q_{i \rightarrow j}(t) \neq 0$ then $q_{i \rightarrow j}(k)=0$ for all $k \in [t+1,t+3B]$ during which $(i,j)$ 
appears. \end{lemma}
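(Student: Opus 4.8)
The plan is to show that once $t$ is large enough, a nonzero message $q_{i \rightarrow j}(t) \neq 0$ on a core edge forces the ``outgoing estimate'' $\widehat{x}_{i,j,\mathrm{out}}$ to track $x_i$ so closely that $R[\,\cdot\,]$ must return $0$ at every subsequent appearance of the edge for the next $3B$ steps. Recall $q_{i\rightarrow j}(k) = R[\,k^{\alpha}(x_i(k-1) - \widehat{x}_{i,j,\mathrm{out}}(k-1))\,]$, so $q_{i\rightarrow j}(k) = 0$ is precisely the statement $|x_i(k-1) - \widehat{x}_{i,j,\mathrm{out}}(k-1)| \le 1/k^{\alpha}$. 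Thus it suffices to bound $|x_i(k-1) - \widehat{x}_{i,j,\mathrm{out}}(k-1)|$ for $k \in [t+1, t+3B]$.

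\textbf{Key steps.} First I would use Lemma \ref{estimates} to pass from the outgoing estimate $\widehat{x}_{i,j,\mathrm{out}}$ held by $i$ to the incoming estimate $\widehat{x}_{j,i,\mathrm{in}}$ held by $j$: we have $\widehat{x}_{i,j,\mathrm{out}}(k) = \widehat{x}_{j,i,\mathrm{in}}(k)$, so $|x_i(k-1) - \widehat{x}_{i,j,\mathrm{out}}(k-1)| = |\widehat{x}_{j,i,\mathrm{in}}(k-1) - x_i(k-2)| + (\text{a correction from } |x_i(k-1)-x_i(k-2)|)$. Since the hypothesis on $t$ here matches (up to the $+1$) the hypothesis of Lemma \ref{correctestimate}, that lemma applies at all times $\ge t+1$ and gives $|\widehat{x}_{j,i,\mathrm{in}}(k-1) - x_i(k-2)| \le \frac{9/8}{(k-1-2B-1)^{\alpha}}$ for the core edge $(i,j)$ (note the roles of $i$ and $j$ in Lemma \ref{correctestimate} are symmetric since $(i,j)$ is core). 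Second, I would control the drift term via Lemma \ref{incbound}: $|x_i(k-1) - x_i(k-2)| \le \frac{(1/2)W(0)}{(k-1)^{\beta}}$, and absorb $W(0)$ using the same bound $W(0) \le (1/(8B))\, s^{\beta-\alpha}$ for $s$ in the relevant range that was used throughout Lemma \ref{correctestimate} (valid because $t$ exceeds the transient threshold). Third, for $k \in [t+1, t+3B]$ we have $k - 1 - 2B - 1 \ge t - 2B - 1$, and since $t$ is past the transient term, the denominators $(t-2B-1)^{\alpha}$ versus $k^{\alpha}$ differ by a bounded multiplicative factor as $k$ ranges over an interval of length $3B$ sitting above a point that is already $\gg B$; combining the $9/8$ term with the now-negligible drift correction, I would conclude the total is at most $1/k^{\alpha}$, i.e. $q_{i\rightarrow j}(k) = 0$.

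\textbf{Main obstacle.} The delicate point is the bookkeeping in step three: Lemma \ref{correctestimate} gives an estimation-error bound of the form $\frac{9/8}{(\cdot - 2B - 1)^{\alpha}}$, which is slightly \emph{larger} than $\frac{1}{k^{\alpha}}$ when the argument is shifted back by $2B+1$, so naively this does not immediately give $q_{i\rightarrow j}(k)=0$. The resolution must use that we are not at the threshold: for $t$ large the ratio $\big(\tfrac{k}{k-2B-1}\big)^{\alpha}$ is close to $1$, and the slack between $9/8$ and the constant $1$ governing the quantizer's dead zone, together with the smallness of the $1/t^{\beta}$ drift relative to $1/t^{\alpha}$ (the $\beta > \alpha$ gap, exactly as emphasized in the remark preceding the lemma), closes the gap. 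I expect the proof to verify this by a short explicit inequality, possibly strengthening the stated $t$-threshold by a harmless constant factor, or by observing that $k \ge t+1$ is already so large that $(k-2B-1)^{\alpha} \ge (8/9)\,k^{\alpha} \cdot (\text{something} > 1)$ fails unless one is more careful --- so the real content is choosing which of the three cases of Lemma \ref{correctestimate}'s conclusion is actually in force here and exploiting that in case (iii) the bound is $\frac{9/8}{(t-2B)^{\alpha}}$ with no backward shift at all. I would route the argument through that observation rather than the generic $\frac{9/8}{(t-2B-1)^{\alpha}}$ bound.
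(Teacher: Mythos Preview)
Your outline has a genuine gap: you never use the hypothesis $q_{i\rightarrow j}(t)\neq 0$ in a substantive way. Your steps (1)--(3) apply Lemma~\ref{correctestimate} and Lemma~\ref{incbound} only, and those together produce an estimation-error bound of the form $\frac{9/8}{(k-2B-2)^{\alpha}}$ plus a negligible $O(t^{-\beta})$ drift. As you correctly note in your ``main obstacle'' paragraph, this cannot be pushed below $1/k^{\alpha}$: the coefficient $9/8$ exceeds $1$, and no amount of arguing that $(k/(k-2B-2))^{\alpha}\approx 1$ for large $t$, or appealing to case~(iii) of Lemma~\ref{correctestimate}, changes that constant. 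The proposed resolutions there do not work.

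What the paper does is use the assumption $q_{i\rightarrow j}(t)\neq 0$ through Lemma~\ref{decbound} (with the roles of $i$ and $j$ swapped). That lemma says a nonzero message produces an explicit \emph{decrease} of $\frac{7/8}{t^{\alpha}}$ in the estimation error $|\widehat{x}_{j,i,\mathrm{in}}(\cdot)-x_i(\cdot-1)|$ from time $t-1$ to time $t$. Starting from the Lemma~\ref{correctestimate} bound $\frac{9/8}{(t-2B-2)^{\alpha}}$ at time $t-1$, this drops the error at time $t$ to $\frac{9/8}{(t-2B-2)^{\alpha}}-\frac{7/8}{t^{\alpha}}$, and then one adds back the accumulated drift $3B\cdot\frac{(1/2)W(0)}{t^{\beta}}$ over the next $3B$ steps. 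Since $t\geq 32B$ gives $(t-2B-2)^{\alpha}\geq 0.85^{\alpha}t^{\alpha}$ and $(t+3B)^{\alpha}\leq 1.1^{\alpha}t^{\alpha}$, the combination $\frac{9/8}{0.85^{\alpha}}-\frac{7/8}{1}<1/1.1^{\alpha}-3/8$ (with margin) is what pushes the total below $\frac{1}{(t+3B)^{\alpha}}\leq \frac{1}{k^{\alpha}}$. The $-7/8/t^{\alpha}$ term from Lemma~\ref{decbound} is precisely the missing ingredient that converts $9/8$ into something strictly below $1$.
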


\bigskip

\begin{proof} By Lemma \ref{correctestimate}, 
\begin{equation} \label{estappl} | \widehat{x}_{i,j,{\rm in}}(t-1) - x_j(t-2)| \leq \frac{9/8}{(t-2B-2)^{\alpha}}.\end{equation} Consequently, $q_{i \rightarrow j}(t) \neq 0$ implies that 
for each $k \in [t,
t+3B-1]$

\begin{eqnarray} \label{placeholder1} |\widehat{x}_{i,j, {\rm in}} (k) - x_j(k)| & \leq & | \widehat{x}_{i,j,{\rm in}}(k) - x_j(k-1)| + \frac{(1/2)W(0)}{k^{\beta}}  \nonumber \\
& \leq &  |\widehat{x}_{i,j,{\rm in}}(t) - x_j(t-1)| + 3B\frac{(1/2)W(0)}{t^{\beta}} \nonumber \\
& \leq & |\widehat{x}_{i,j,{\rm in}}(t-1) - x_j(t-2)| - \frac{7/8}{t^{\alpha}} + 3B \frac{(1/2)W(0)}{t^{\beta}}  \nonumber \\
& \leq & \frac{9/8}{(t-2B-2)^{\alpha}} - \frac{7/8}{t^{\alpha}} + 3B \frac{(1/2) W(0)}{t^{\beta}} \label{expr}.
\end{eqnarray} Here the first two inequalities follow from Lemma \ref{incbound}; the third follows from Eq. (\ref{decbound}); and the last one follows by plugging in Eq. (\ref{estappl}). 

 We claim that our assumption $t \geq   2^{\frac{1}{1-\alpha}} \lceil 32B + 8B W(0)\rceil^{\frac{1}{\beta-\alpha}} + \left( 32 B ||x(0)||_{\infty} \right)^{\frac{2}{1-\alpha}} + 1$ implies the final expression on the right is always 
less than $1/(t+3B)^{\alpha}$. Indeed, this assumption implies that $t \geq 32B$ so that $t-2B-2 \geq 0.85t$ and $t+3B \leq 1.1t$. Consequently, 
\begin{eqnarray*} \frac{1}{(t+3B)^{\alpha}} - \frac{9/8}{(t-2B-2)^{\alpha}}+\frac{7/8}{t^{\alpha}} & \geq & \frac{1/1.1^{\alpha}-(9/8)/0.85^{\alpha}+7/8}{t^{\alpha}} \\
& > & \frac{3/8}{ t^{\alpha}} \\
& = & \frac{3/8}{t^{\beta}} t^{\beta-\alpha} \\ 
& \geq & \frac{3/2}{t^{\beta}} B W(0)
\end{eqnarray*} where the last step used our assumed lower bound on $t$ which implies that $t^{\beta-\alpha} \geq 4 B W(0)$. The last inequality
is, after rearranging, precisely our claim that the right-hand side of Eq. (\ref{expr}) is at most $1/(t+3B)^{\alpha}$.

Since  $$|\widehat{x}_{i,j, {\rm in}} (k) - x_j(k)| \leq \frac{1}{(t+3B)^{\alpha}} \leq \frac{1}{k^{\alpha}}$$ for all $k \in [t,t+3B-1]$, we have that $q_{i \rightarrow j}(k)=0$ for all $k \in [t+1, t+3B]$  when the edge $(i,j)$ appears. 
\end{proof} 

\bigskip

A corollary of this lemma is that every $3B$ steps, both messages $q_{i \rightarrow j}$ and $q_{j \rightarrow i}$ across a 
core link will be zero.  This is stated formally in the following lemma. This is a key lemma for us, and we will use it later to argue
that as long as the values $x_i(t)$ are not very close together, a $V_2(t)$-reducing Metropolis update should eventually take place. 


\bigskip

\begin{corollary} \label{messagesexchange} Suppose the edge $(i,j)$ is a core edge and 
suppose $t$ is a multiple of $B$ satisfying $$t \geq  2^{\frac{2}{1-\alpha}} \lceil 32 B + 8 B W(0)  \rceil^{\frac{1}{\beta-\alpha}} + \left( 32 B ||x(0)||_{\infty} \right)^{\frac{2}{1-\alpha}}.$$ Then there exists a time in $[t+1,t+3B]$ with 
\[ q_{i \rightarrow j}(t) = q_{j \rightarrow i}(t) = 0. \]  \end{corollary}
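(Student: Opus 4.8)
The plan is to combine two facts. First, because $(i,j)$ is a core edge and $t=mB$ is a multiple of $B$, each of the three consecutive length-$B$ blocks $[t+1,t+B]$, $[t+B+1,t+2B]$, $[t+2B+1,t+3B]$ (namely $[mB+1,(m+1)B]$, $[(m+1)B+1,(m+2)B]$, $[(m+2)B+1,(m+3)B]$) contains at least one time at which $(i,j)$ appears in the graph sequence, directly from the definition of $B$-core-connectedness. Second, Lemma \ref{messagesexchangepre} is a ``cooldown'' statement: once the message $q_{i\to j}$ across a core edge is nonzero at a time $s$, it must equal zero at every appearance of $(i,j)$ during the next $3B$ steps. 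Every time we consider will be at least $t+1$; and for $B\ge 1$ the lower bound on $t$ assumed here is at least the threshold of Lemma \ref{messagesexchangepre} minus one, since $\lceil 32B+8BW(0)\rceil \ge \lceil 32B+8W(0)\rceil$. Hence that lemma applies at every appearance of $(i,j)$ in $[t+1,t+3B]$, and since nothing in its hypothesis distinguishes the two endpoints of the edge, it also holds verbatim with $i$ and $j$ interchanged.

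First I would pick an appearance $t_1 \in [t+1,t+B]$ of $(i,j)$. If $q_{i\to j}(t_1)=q_{j\to i}(t_1)=0$ we are done; otherwise, relabelling $i$ and $j$ if necessary, we may assume $q_{i\to j}(t_1)\ne 0$. Applying Lemma \ref{messagesexchangepre} at $t_1$ gives $q_{i\to j}(k)=0$ at every appearance $k$ of $(i,j)$ in $[t_1+1,t_1+3B]$, and since $t+1\le t_1\le t+B$ this interval contains $[t+B+1,t+3B]$. So $q_{i\to j}$ vanishes at every appearance of $(i,j)$ in the last two blocks.

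Next I would pick an appearance $t_2 \in [t+B+1,t+2B]$. By the previous step $q_{i\to j}(t_2)=0$; if also $q_{j\to i}(t_2)=0$ we are done, so assume $q_{j\to i}(t_2)\ne 0$. Applying Lemma \ref{messagesexchangepre} with $i$ and $j$ swapped at time $t_2$ gives $q_{j\to i}(k)=0$ at every appearance $k$ of $(i,j)$ in $[t_2+1,t_2+3B]\supseteq[t+2B+1,t+3B]$. Finally, choosing any appearance $t_3 \in [t+2B+1,t+3B]$ of $(i,j)$, the first step yields $q_{i\to j}(t_3)=0$ and the second yields $q_{j\to i}(t_3)=0$, so $t_3$ is the desired time.

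Beyond this bookkeeping the points needing care are the interval inclusions $[t_1+1,t_1+3B]\supseteq[t+B+1,t+3B]$ and $[t_2+1,t_2+3B]\supseteq[t+2B+1,t+3B]$, immediate from $t+1\le t_1\le t+B$ and $t+B+1\le t_2\le t+2B$, and the verification that the hypothesis on $t$ here implies the hypothesis of Lemma \ref{messagesexchangepre} at all times $\ge t+1$. I expect this threshold comparison, together with confirming that the $i\leftrightarrow j$ version of Lemma \ref{messagesexchangepre} is legitimate, to be the most delicate part of the argument, though it remains routine.
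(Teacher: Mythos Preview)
Your proof is correct and follows the same approach as the paper's: use $B$-core-connectedness to find three appearances of $(i,j)$ in consecutive blocks, and apply Lemma \ref{messagesexchangepre} (possibly twice, once in each direction) to force both messages to vanish by the third appearance. The paper's version is terser---it simply asserts that if both messages are not zero at the first appearance then the lemma forces them to be zero at the second or third---whereas you carefully spell out the case analysis, the interval inclusions, and the threshold comparison; your added detail is appropriate and the argument is sound.
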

\begin{proof} Indeed, since the edge $(i,j)$ appears at least once in every period $[kB+1, (k+1)B]$, it follows it must appear at least thrice within the period 
$[t+1,t+3B]$. If both $q_{i \rightarrow j}$ and $q_{j \rightarrow i}$ are zero during the first of those times, we are finished. If not, Lemma \ref{messagesexchangepre} implies they will be zero during either the second or the third of these times. 
\end{proof}

\bigskip

Our next lemma forms the core of the proof our main theorem. Informally, it states that if the variance is not too low, it must
decrease by a multiplicative factor over the next $3B$ steps. Before proceeding, we need to mention the standard relationship between $W(t)$ and $V_2(t)$ which
we will use:  \begin{equation} \label{infty2ineq} \frac{W(t)}{2} \leq V_2(t) \leq \sqrt{n} W(t). 
  \end{equation}
\bigskip

\begin{lemma} Suppose that $t$ is a multiple of $B$ satisfying \[ t \geq  2^{\frac{2}{1-\alpha}} \lceil 32B + 8 B W(0)  \rceil^{\frac{1}{\beta-\alpha}} + \left( 32 B ||x(0)||_{\infty} \right)^{\frac{2}{1-\alpha}}  \] and \begin{equation} V_2(t) \geq \frac{8n^{1.5}}{t^{\alpha}}. \label{assumption} \end{equation} Then 
\[ V_2(t+3B) \leq \left( 1-\frac{1}{50 n^3 D (t+3B)^{\beta}} \right) V_2(t) \] \label{mainlemma}
\end{lemma}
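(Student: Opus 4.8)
The plan is to show that over a window of $3B$ steps where a core edge $(i,j)$ is guaranteed (by Corollary \ref{messagesexchange}) to carry a pair of zero messages, a genuine Metropolis averaging step occurs along \emph{some} core edge that is ``active'' in the sense that the two endpoints have sufficiently different values. I would first use the assumption $V_2(t)\ge 8n^{1.5}/t^\alpha$ together with the standard inequality $W(t)/2\le V_2(t)\le\sqrt n\,W(t)$ of Eq.~(\ref{infty2ineq}) to conclude that $W(t)$ — the spread of the values — is at least something like $8n/t^\alpha$; since the $x_i$ change by at most $(1/2)W(0)/k^\beta$ per step (Lemma \ref{incbound}) and $\beta>\alpha$, for $t$ past the stated transient the spread stays bounded below by, say, $4n/k^\alpha$ throughout $[t,t+3B]$. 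Because the core edges form a connected spanning subgraph on $n$ nodes, there must be a core edge $(i,j)$ whose endpoints differ by at least $W(k)/n \gtrsim 4/k^\alpha$; combined with the accuracy estimate of Lemma \ref{correctestimate} (which controls $|\widehat x_{i,j,\mathrm{in}}(k)-x_j(k-1)|$ and symmetrically the ``out'' estimate) and Corollary \ref{messagesexchange} (which gives a time in the window with both messages zero), this forces $|\widehat x_{i,j,\mathrm{in}}-\widehat x_{i,j,\mathrm{out}}|>4/k^\alpha$ at that time, i.e. $j\in S(i,k)$ — so the edge really does participate in the update of Eq.~(\ref{update}) with weight $w_{ij}\in[2/3,2]$.

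Next I would invoke the matrix form of Lemma \ref{convexrewriting}: $x(k)=(1-k^{-\beta})x(k-1)+k^{-\beta}A(k-1)x(k-1)$ with $A(k-1)$ symmetric stochastic and, crucially, having every nonzero off-diagonal entry at least $1/(8D)$. The contraction of $V_2$ under such a step is governed by the spectral gap of the relevant averaging operator; the standard bound (as in \cite{noot09}) is that if the ``effective'' averaging matrix $P(k-1):=(1-k^{-\beta})I+k^{-\beta}A(k-1)$ has an edge of weight $\ge c$ connecting two nodes, then $V_2^2$ decreases by a factor controlled by $c/n$ along that single edge. Concretely, using that a single symmetric averaging edge of weight $w$ reduces $\sum_i(x_i-\bar x)^2$ by $2w(1-w)(x_i-x_j)^2 \ge w(x_i-x_j)^2$ for $w\le 1/2$, and that here the effective weight across the active core edge is $\ge \frac{1}{t^\beta}\cdot\frac{1}{8D}\cdot\frac{2}{3}$, I would get
\[
V_2^2(k)\le V_2^2(k-1)-\frac{c}{D\,(t+3B)^\beta}\,(x_i(k-1)-x_j(k-1))^2
\]
for a fixed constant $c$. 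Substituting $(x_i-x_j)^2\gtrsim (W(k)/n)^2\gtrsim (V_2(k-1)/(n^{1.5}))^2$ — here using $W\ge V_2/\sqrt n$ and that the active edge is one of at most $n$ core edges on a spanning connected graph, so its endpoints differ by a $1/n$-fraction of the total spread — yields $V_2^2(k)\le (1-\tfrac{c'}{n^3 D (t+3B)^\beta})V_2^2(k-1)$, and since $V_2$ is nonincreasing (Lemma \ref{monotonicity}) the factor telescopes over the $3B$-step window to give $V_2(t+3B)\le(1-\tfrac{1}{50n^3 D(t+3B)^\beta})V_2(t)$ after taking square roots and absorbing constants.

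The main obstacle, and where I would spend the most care, is the geometric step connecting the \emph{global} quantity $V_2(t)$ to the existence of a \emph{single core edge} whose endpoints are far apart \emph{and} which survives as an element of $S(i,k)$ through the window. The first half — extracting a far-apart core edge from connectivity of $E_\infty$ — is a clean averaging/path argument ($\max_i x_i-\min_i x_i \le (n-1)\max_{\text{core }(i,j)}|x_i-x_j|$ along a spanning tree of core edges). The delicate half is verifying the $S(i,k)$ membership: one must line up the time $k\in[t+1,t+3B]$ produced by Corollary \ref{messagesexchange} (both messages zero) with the moment at which $|\widehat x_{i,j,\mathrm{in}}(k)-\widehat x_{i,j,\mathrm{out}}(k)|$ provably exceeds $4/k^\alpha$, and this requires combining Lemma \ref{correctestimate}'s $O(1/k^\alpha)$ estimation-error bound (which has a slightly awkward $(k-2B-1)^\alpha$ in the denominator, hence the need for $t\ge 32B$ to replace it by a constant multiple of $k^\alpha$) with the lower bound $|x_i(k-1)-x_j(k-1)|\gtrsim 1/k^\alpha$ and the triangle inequality. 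Getting the numerical constants to close — so that the gap $4/k^\alpha$ is genuinely cleared rather than barely missed — is the fiddly heart of the argument, but it is exactly the kind of estimate Lemmas \ref{correctestimate} and \ref{messagesexchangepre} were built to supply.
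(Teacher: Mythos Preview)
Your proposal is correct and follows essentially the same route as the paper: pick a core edge with gap at least $W(t)/n$ via connectivity of $E_\infty$, use Corollary \ref{messagesexchange} to locate a time $t'\in[t+1,t+3B]$ with both messages zero, combine Lemma \ref{correctestimate} with the drift bound of Lemma \ref{incbound} to verify $j\in S(i,t')$, and then extract a multiplicative drop in $V_2$ from the resulting averaging step together with monotonicity. The only cosmetic difference is that the paper computes the variance drop via the identity $V_2(Ax)^2 = V_2(x)^2 - \sum_{k<l}[A^2]_{kl}(x_k-x_l)^2$ (which cleanly handles the presence of other active edges and gives $[A^2]_{ij}\ge 1/(16D)$ from diagonal dominance), whereas your single-edge formula $2w(1-w)(x_i-x_j)^2$ is literally correct only for a lone averaging pair; the $A^2$ decomposition is what justifies dropping the other edges as nonnegative contributions. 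One small point worth tightening: you should fix the far-apart core edge at time $t$ and then carry \emph{that specific edge} to time $t'-1$ using the $O(1/t^\beta)$ drift bound, rather than re-selecting a far-apart edge at each $k$ --- the paper does exactly this, and it is what lets Corollary \ref{messagesexchange} (applied to that fixed edge) line up with the gap estimate.
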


\begin{proof} Because the subgraph consisting of core edges is connected, we have that at any time $t$ there exists a core edge $(i,j)$ such that $| x_i(t) - x_j(t) | \geq W(t)/n$. Indeed, if it were otherwise, then taking the shortest path along
core edges from a node with the smallest value to a 
node with the largest value and considering the change in the value along each edge we would get that 
$M(t) - m(t) < n \left( W(t)/n \right) = W(t)$, which is nonsense. 

Thus we can conclude that for some core edge $(i,j)$,
\begin{equation} \label{nonefive} |x_i(t) - x_j(t)| \geq \frac{W(t)}{n} \geq \frac{V_2(t)}{n^{1.5}}. \end{equation}  Now applying Eq. (\ref{infty2ineq}) to  Eq. (\ref{assumption}) we have, \[ W(t) > 8 \frac{n}{t^{\alpha}}. \] 

Putting the last pair of  inequalities together,  \begin{equation} \label{bigdiff} |x_i(t)-x_j(t)| \geq \frac{W(t)}{n} > \frac{8n/t^{\alpha}}{n} = \frac{8}{t^{\alpha}}. \end{equation} 

Now our assumed lower bound $t \geq  2^{\frac{2}{1-\alpha}} \lceil 32B + 8 B W(0) \rceil^{\frac{1}{\beta-\alpha}} + \left( 32 B ||x(0)||_{\infty} \right)^{\frac{2}{1-\alpha}}$
allows us to apply Corollary \ref{messagesexchange} and claim there exists a time $t'$ in $[t+1,t+3B] $ such that $q_{i \rightarrow j}(t')=q_{j \rightarrow i}(t')=0$. Moreover, since between times $t$ and $t'-1$, both $x_i(t)$ and $x_j(t)$ can change by at most $3B(1/2)W(0)/t^{\beta}$, and our assumed lower bound on 
$t$ implies that this is at most $(1/2)/t^{\alpha}$. This means $|x_i(t'-1)-x_j(t'-1)| > 7/t^{\alpha} > 7/(t')^{\alpha}$ and so $$|\widehat{x}_{i,j, {\rm in}}(t') - \widehat{x}_{i,j, {\rm out}}(t')| > \frac{7}{(t')^{\alpha}} - 2 \frac{9/8}{(t'-2B-1)^{\alpha}} > \frac{4}{(t')^{\alpha}},$$ where we used $t \geq 32B$ for the final inequality. The above 
equation implies that $j \in S(i,t')$ and consequently $i \in S(j,t')$. 

Moreover, putting Eq. (\ref{bigdiff}) with the observation in the previous paragraph that both $|x_i(t) - x_i(t'-1)|$ and $|x_j(t) - x_j(t'-1)|$ are upper bounded by
$(1/2)/t^{\alpha}$, we have that 
\begin{equation} \label{primevar} |x_i(t'-1)-x_j(t'-1)| \geq \frac{7}{8} |x_i(t) - x_j(t)| \geq \frac{7}{8} \frac{V_2(t)}{n^{1.5}} \geq \frac{7}{8} \frac{V_2(t'-1)}{n^{1.5}} \end{equation} where the last inequality used the monotonicity of $V_2(t)$ and the penultimate inequality used Eq. (\ref{nonefive}). 

We now proceed to lower bound the decrease from $V_2(t'-1)$ to $V_2(t')$ due to the link 
between $i$ and $j$ at time $t'$. 

Since $j \in S(i,t')$ we have that $a_{ij}(t'-1)$ is positive, and therefore $a_{ij}(t'-1) \geq 1/(8 D(i,j,t'))$. Using 
the decomposition (see \cite{noot09}, \cite{xbk07})
\[ A^2(t'-1) = I - \sum_{k<l} [A^2(t'-1)]_{kl} (\e_k - \e_l) (\e_k - \e_l)^T,\]  where $\e_k$ as usual means the $k$'th basis vector, we have
that \begin{eqnarray*} V_2(A(t'-1)x(t'-1))^2 & = & (x(t'-1) - \bar{x}(t'-1) \1)^T (A(t'-1))^2 (x(t'-1) - \bar{x} \1) \\ 
& = &  V_2(x(t'-1))^2 - \sum_{k<l} [A^2(t'-1)]_{kl} (t) (x_k(t'-1) - x_l(t'-1))^2, 
\end{eqnarray*} and, since $A(t'-1)$ is diagonally dominant and satisfies $a_{ij}(t'-1) \geq 1/(8 D(i,j,t'))$, we have that
$[A^2(t'-1)]_{ij} \geq (1/(16  D(i,j,t') )$; moreover, using Eq. (\ref{primevar}), we obtain
\begin{eqnarray*} V_2(A(t'-1)x(t'-1))^2 & \leq & V_2(x(t'-1))^2  -  \frac{(7/8)^2}{16 D(i,j,t')} \frac{V_2(t'-1)^2}{n^3} 
\end{eqnarray*} 
so
\begin{equation} \label{multbound} V_2(A(t'-1) x(t'-1)) \leq V_2(t'-1) \sqrt{1 - \frac{(7/8)^2}{16 n^3 D}} \leq V_2(x(t'-1)) (1 - \frac{1}{50n^3 D}) 
\end{equation} and therefore
\begin{eqnarray*} V_2(t') & \leq &   \left( (1-\frac{1}{(t')^{\beta}}) V_2 (t'-1) + \frac{1}{(t')^{\beta}} V_2 (A(t'-1) x(t'-1) ) \right)  \\ 
& \leq & \left( 1-\frac{1}{50 n^3 D (t')^{\beta}} \right) V_2(x(t'-1))
\end{eqnarray*} and since $V_2(t)$ is nonincreasing in $t$ (Lemma \ref{monotonicity}) and $t' \leq t+3B$, this concludes the proof. 
\end{proof}

\bigskip

With this quantitative bound on the decrease of $V_2(t)$, we are almost ready to begin the proof of our main result,  Theorem \ref{precursorthm}. However, we first need the following technical lemma which will allow us to bound some of the expressions which will appear in the course of the proof that theorem.

\bigskip

\begin{lemma} The function $g(k) = \frac{e^{(1/d) (k+f)^c}}{k^e}$ with $c \in (0,1)$, where the constants $d,e$ and $f$ are positive,  is nondecreasing over the range $ k \in [t_0, +\infty)$ where $t_0$
is any positive number which satisfies $\frac{t_0}{(t_0+f)^{1-c}} \geq de/c$. \label{nondeclemma}
\end{lemma}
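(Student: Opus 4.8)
The plan is to take the logarithmic derivative of $g$ and show it is nonnegative on $[t_0,+\infty)$. Since $g(k)>0$ for every $k>0$, it is equivalent, and cleaner, to show that $\log g(k) = \frac{1}{d}(k+f)^c - e\log k$ is nondecreasing on $[t_0,+\infty)$. Differentiating gives $(\log g)'(k) = \frac{c}{d}(k+f)^{c-1} - \frac{e}{k}$, and this quantity is nonnegative precisely when
\[ \frac{k}{(k+f)^{1-c}} \geq \frac{de}{c}. \]
So the whole lemma reduces to checking that this inequality, which is assumed to hold at $k=t_0$, continues to hold for every $k \geq t_0$.

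The next step is to show that the left-hand side above, viewed as the function $h(k) = k\,(k+f)^{c-1}$, is itself nondecreasing in $k$. A direct computation gives $h'(k) = (k+f)^{c-1} + (c-1)k(k+f)^{c-2} = (k+f)^{c-2}\big(f + ck\big)$, which is strictly positive for $k>0$ since $c \in (0,1)$ and $f>0$ (note that $(k+f)^{c-2}$ is well defined and positive even though the exponent $c-2$ is negative). Hence $h$ is increasing on $(0,+\infty)$, so the hypothesis $h(t_0) \geq de/c$ forces $h(k) \geq de/c$ for all $k \geq t_0$. Combined with the first paragraph, $(\log g)'(k) \geq 0$ throughout $[t_0,+\infty)$, and therefore $g$ is nondecreasing there, as claimed.

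There is no genuine obstacle here: this is elementary calculus, and the only point worth isolating is the monotonicity of the auxiliary threshold function $h$, which is exactly what allows a condition verified at the single point $t_0$ to propagate to the entire half-line $[t_0,+\infty)$.
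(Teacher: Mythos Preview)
Your proof is correct and follows essentially the same route as the paper: both reduce $g'(k)\geq 0$ (you via the logarithmic derivative, the paper via the quotient rule) to the inequality $k/(k+f)^{1-c}\geq de/c$, and then use that the left-hand side is nondecreasing in $k$. You are actually more thorough than the paper, since you explicitly verify this monotonicity by computing $h'(k)=(k+f)^{c-2}(f+ck)>0$, whereas the paper simply asserts it.
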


\bigskip

\begin{proof} We have \[ g'(k) = \frac{e^{(1/d) (k+f)^c} (c/d) (k+f)^{c-1} k^{e} - e k^{e-1} e^{(1/d)(k+f)^c} }{k^{2e}} \] so
$g'(k) \geq 0$ if $$\frac{c}{d} (k+f)^{c-1} \geq \frac{e}{k}$$ or $$\frac{k}{(k+f)^{1-c}} \geq \frac{de}{c}.$$ Moreover, the 
expression on the left-hand side is nondecreasing for nonnegative $k$. 
\end{proof} 

\bigskip

We now proceed to the proof of our main result. Following the numerous lemmas which we have already established,  the proof now follows by a
straightforward (albeit tedious) calculation. Indeed, Lemma  \ref{mainlemma} tells us that either $V_2(t)$ is small in the sense of being bounded above
by something that goes to zero with time, or $V_2(t)$ decreases by a multiplicative 
factor. It is now an exercise in analysis to use this dichotomy to obtain an upper bound on the rate at which $V_2(t)$ approaches zero. 

\bigskip

\begin{proof}[Proof of Theorem \ref{precursorthm}] Let us use the shorthand $$T=2^{\frac{2}{1-\alpha}} \lceil 32B + 8B W(0) \rceil^{\frac{1}{\beta-\alpha}} + \left( 32 B ||x(0)||_{\infty} \right)^{\frac{2}{1-\alpha}} + (300 n^3 D B )^{\frac{1}{1-\beta}}$$ and let us suppose $l$ is a multiple of $B$ which satisfies $l \geq T$.  If Eq. (\ref{assumption}) holds at times 
$l,l+3B, l+6B, \ldots, l+3(k-1)B$ for some $k \geq 1$, then we can apply Lemma \ref{mainlemma} to get 
\[ V_2(l+3kB) \leq V_2(l)  \left( 1-\frac{1}{50 n^3 D (l+3B)^{\beta}} \right) \left( 1-\frac{1}{50n^3 D (l+6B)^{\beta}} \right) \cdots \left( 1-\frac{1}{50n^3 D (l+3kB)^{\beta}} \right) \] which, using the inequality $\ln(1-x) \leq -x$, implies 
\begin{eqnarray*} \ln \frac{ V_2(l+3kB)}{V_2(l)} & \leq & - \sum_{j=1}^{k} \frac{1}{50n^3 D (l + 3jB)^{\beta}}  \\ 
& \leq & - \frac{1}{50n^3 D} \frac{1}{3B} \sum_{m=l+3B}^{l+3kB} \frac{1}{m^{\beta}} \\
& \leq & - \frac{1}{150n^3 D } \int_{l+3B}^{l+3kB} \frac{dz}{z^\beta} \\  
& \leq & - \frac{(l+3kB)^{1-\beta}-(l+3B)^{1-\beta}}{150 B  n^3 D (1 - \beta)} 
\end{eqnarray*} which in turn implies
\begin{equation} \label{alwaysmult} V_2(l+3kB) \leq V_2(l) e^{-((l+3kB)^{1-\beta}-(l+3B)^{1-\beta})/(150(1-\beta)n^3 D B)} \end{equation} Now fix $T'$ to be the first multiple of $B$ which is at least $T$, and observe that $t$ (which satisfies the lower bound we have assumed in the theorem statement) is at least $T'$. We now consider the last time in the set $\Gamma = \{T', T'+3B, T'+6B, T'+9B, \ldots \} \cap \{1, \ldots, t\}$ when Eq. (\ref{assumption}) holds. We consider three possibilities. 

It may be that Eq. (\ref{assumption}) holds for all numbers in $\Gamma$. In that case, Eq. (\ref{alwaysmult}) implies
\[ V_2(t) \leq \ V_{2}(0) e^{-((t-3B)^{1-\beta}-(T'+3B)^{1-\beta})/(150(1-\beta) n^3 D B)}. \] On the other hand, it may be that the last number in $\Gamma$ when Eq. (\ref{assumption}) does not hold occurs at time $t-7B$ or afterwards. In that case, 
\[ V_2(t) \leq \frac{8 n^{1.5}}{(t-7B)^{\alpha}}. \] Finally, it may be that there exists a number $k \in \Gamma$ where Eq. (\ref{assumption}) does not hold, but the last such $k$ occurs before $t-7B$. In that case, by considering the variance decay from the next multiple of $B$ after $k$ and applying Lemma \ref{mainlemma}, we obtain
\[ V_2(t) \leq \max_{T' \leq k \leq t-7B} \frac{8n^{1.5}}{k^{\alpha}} e^{-((t-3B)^{1-\beta}-(k+4B)^{1-\beta})/(150(1-\beta) n^3 D B)}. \] Putting the last
three inequalities together, we have the
unconditional bound  \begin{small}
\begin{equation} \label{unconditional} V_2(x(t)) \leq \max \left( V_{2}(0) e^{-((t-3B)^{1-\beta}-(T'+3B)^{1-\beta})/(150(1-\beta) n^3 D B)}, \max_{T' \leq k \leq t-7B} \frac{8n^{1.5}}{k^{\alpha}} e^{-((t-3B)^{1-\beta}-(k+4B)^{1-\beta})/(150(1-\beta) n^3 D B)}, \frac{8n^{1.5}}{(t-7B)^{\alpha}} \right)  \end{equation} \end{small}  Lets consider how long it takes the first term to fall below $\epsilon$.  The 
inequality \[ V_{2}(0) e^{-((t-3B)^{1-\beta}-(T'+3B)^{1-\beta})/(150(1-\beta) n^3 D B)} \leq \epsilon \] is implied by
\[ (t-3B)^{1-\beta}-(T'+3B)^{1-\beta} \geq 150 n^3 D  B \log \frac{V_2(0)}{\epsilon} \] or 
\[ t \geq 3B + \left( (T'+3B)^{1-\beta} + 150n^3 D B \log \frac{V_2(0)}{\epsilon} \right)^{\frac{1}{1-\beta}} \] which, using the inequality $(a+b)^k \leq 2^{k - 1} (a^k+b^k)$ for $k \geq 1$, is in turn implied by
\begin{eqnarray*} t & \geq &  3B + 2^{\frac{1}{1-\beta}} \left( T' + 3B + (150 n^3 D B \log \frac{\epsilon}{V(0)})^{\frac{1}{1-\beta}} \right) \end{eqnarray*}
which is implied by \[ t
\geq  3B + 2^{\frac{1}{1-\beta}} \left( T + 4B + (150 n^3 D B \log \frac{\epsilon}{V(0)})^{\frac{1}{1-\beta}} \right)
\]
Next we consider the second term in the maximum of Eq. (\ref{unconditional}). We will argue that it is maximized at the choice of $k=t-7B$. Indeed, since $t$ is fixed and the maximization is over $k$, we are maximizing the function 
\[ \frac{ e^{(k+4B)^{1-\beta}/(150 (1-\beta)n^3 B )} }{k^{\alpha}}  \] We now appeal to Lemma \ref{nondeclemma}. In the notation of that lemma, $c=1-\beta, d = 150 (1-\beta) n^3 B, e = \alpha, f=4B$. The lemma then tells us that if $T'$ is large enough so that
\begin{equation} \label{tcond} \frac{T'}{(T'+4B)^{\beta}} \geq 150 n^3 D B \alpha \end{equation} we may set $k=t-7B$ in the maximization problem. But since $T' \geq 32B$, we have
that \[ \frac{T'}{(T'+4B)^{\beta}} = (T')^{1-\beta} \frac{(T')^{\beta}}{(T'+4B)^{\beta}} \geq \frac{(T')^{1-\beta}}{1.2^{\beta}} \] so that for Eq. (\ref{tcond}) to be true it suffices that $T' \geq (1.2^{\beta} \cdot 150 n^3 D B \alpha)^{1/(1-\beta)}$ which clearly holds by definition of $T$. 
As a consequence, we finally have $V_2(t)$ is below $\epsilon$ if \begin{footnotesize} 
\[ t \geq 7B + \max \left( 2^{\frac{1}{1-\beta}} \left( T + 4B + (150 n^3 D B \log \frac{\epsilon}{V(0)})^{\frac{1}{1-\beta}} \right), \left(  \frac{8 n^{1.5}}{\epsilon} \right)^{\frac{1}{\alpha}} \right) \] \end{footnotesize}  After some simple algebra and rearrangement of terms, this becomes the bound of 
Theorem \ref{precursorthm}.

\end{proof}

\section{Simulations\label{simulations}} Here we report on some simulations of our consensus protocol. We will be focusing on seeing 
how the performance of our protocol scales with the time $t$ for fixed $n$ and seeing how the time to reach a certain level of 
accuracy scales with the number of nodes for various graph topologies. 

We will be simulating a slightly modified form of our protocol in which we omit the stepsize of $1/t^{\beta}$, i.e, in which
we set $\beta=0$. 
The introduction of this stepsize is a mathematical necessity for our convergence result, but it does not appear to be practically necessary. Similarly, we will double the weight every agent places on its neighbors as our consensus protocol is likely too conservative.  Thus we will be simulating the update 
\[ x_i(t) =    x_i(t-1) +  \sum_{j \in S(i,t)} \frac{\widehat{x}_{i,j,{\rm in}}(t) - \widehat{x}_{i,j, {\rm out}}(t)} {2 {\rm max} (d_i(t), d_j(t))} \] instead of Eq. (\ref{mainiter2}). 
\begin{figure}[h] 
\begin{center}$
\begin{array}{ccc}
\includegraphics[width=3.2in]{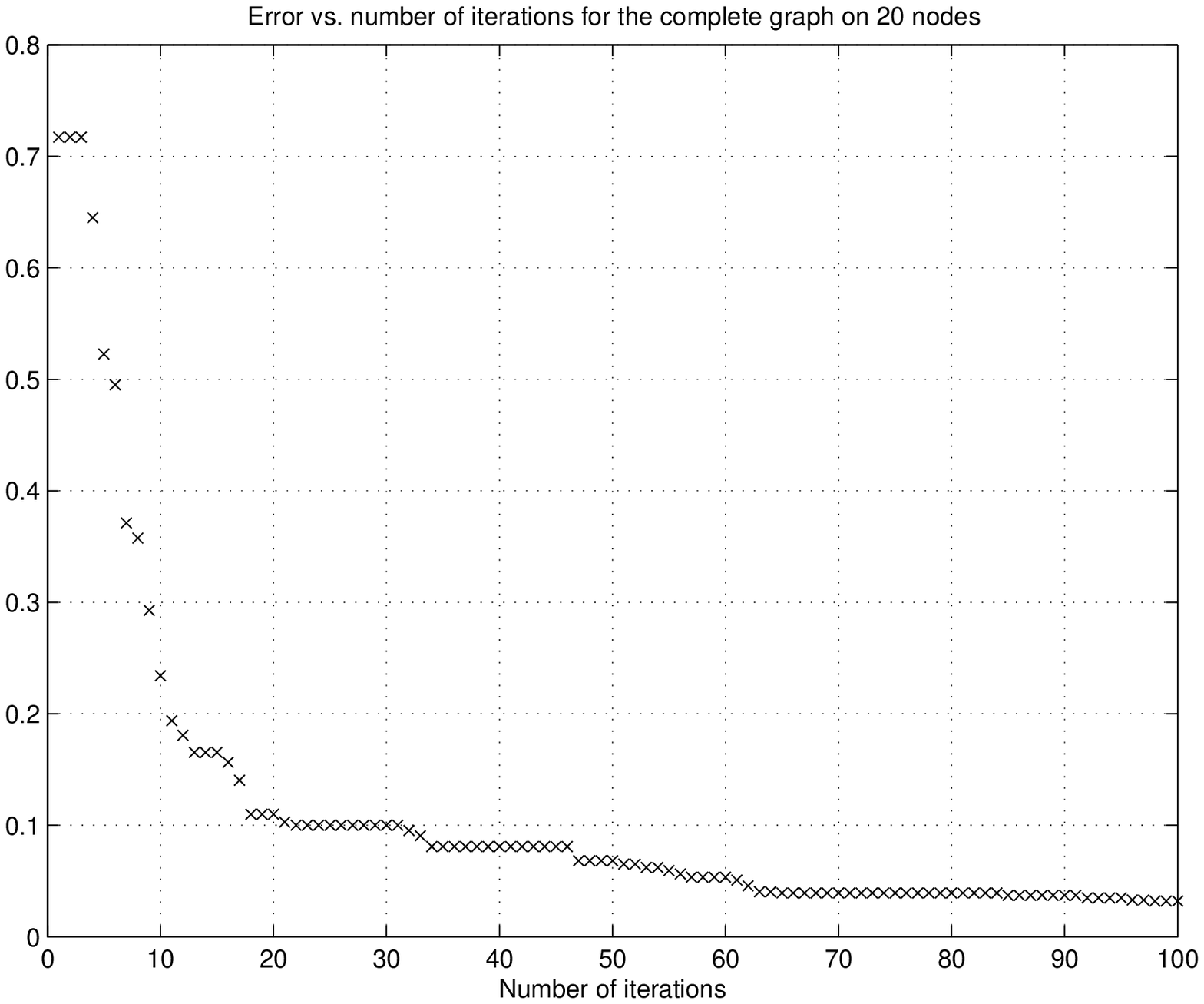} &
\includegraphics[width=3.2in]{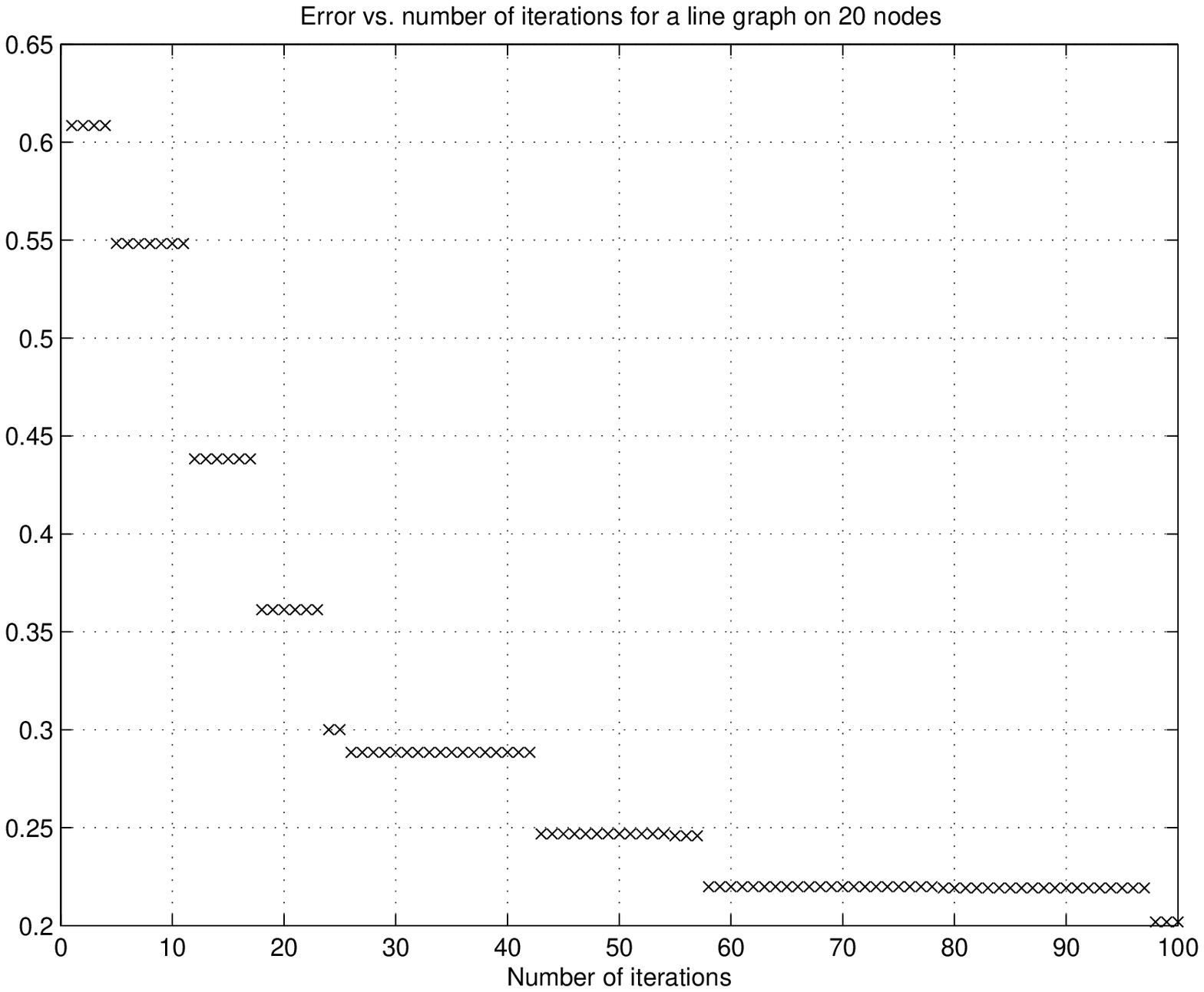} 
\end{array}$ 
\end{center} \caption{The left plot shows our consensus protocol on the complete graph on $20$ nodes, while the right graph shows our consensus protocol on the line graph on $20$ nodes. Both graphs show the error at time $t$, i.e., $\max_i |x_i(t) - \frac{1}{n} \sum_{j=1}^n x_j(0)|$ vs
the number of iterations $t$. Initial conditions are random for both graphs, and we have chosen $\alpha=0.9$ in both.}  
\label{time-error}
\end{figure}

In Figure \ref{time-error}, we show the decay of the error $\max_i |x_i(t) - \frac{1}{n} \sum_{j=1}^n x_j(0)|$ with time for the 
complete graph and the line graph with $\alpha=0.9$. Much like our error bound predicts, we see a slow decay which appears to be asymptotic
to a power of $t$. We point out the jagged features of the graph, which are more prominent in the case of the line: it often takes some time
for the estimates $\widehat{x}_{i,j, {\rm in}}$ to become accurate,  which results in periods without any updates. 

\begin{figure}[h] 
\begin{center}$
\begin{array}{ccc}
\includegraphics[width=3.2in]{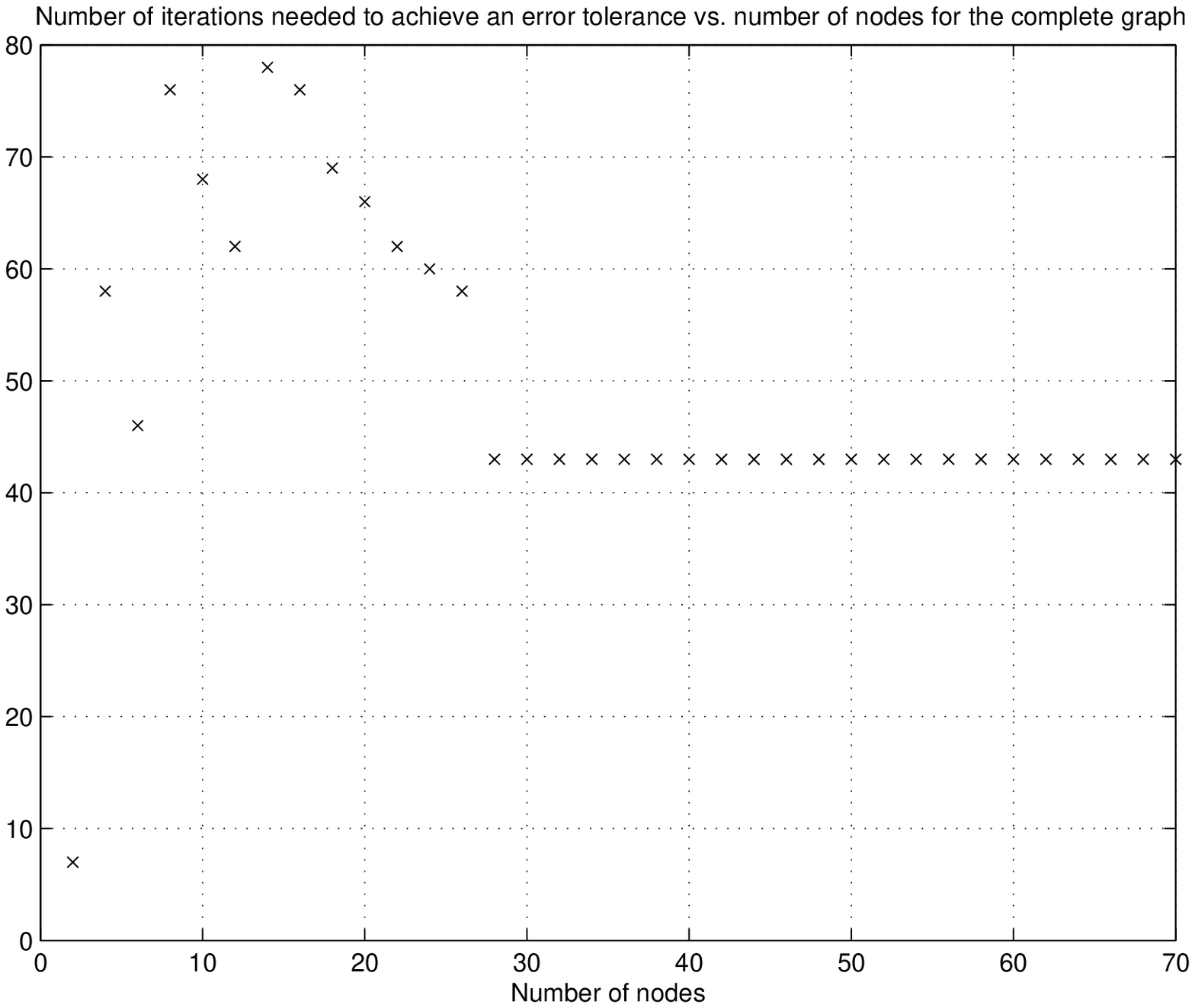} &
\includegraphics[width=3.2in]{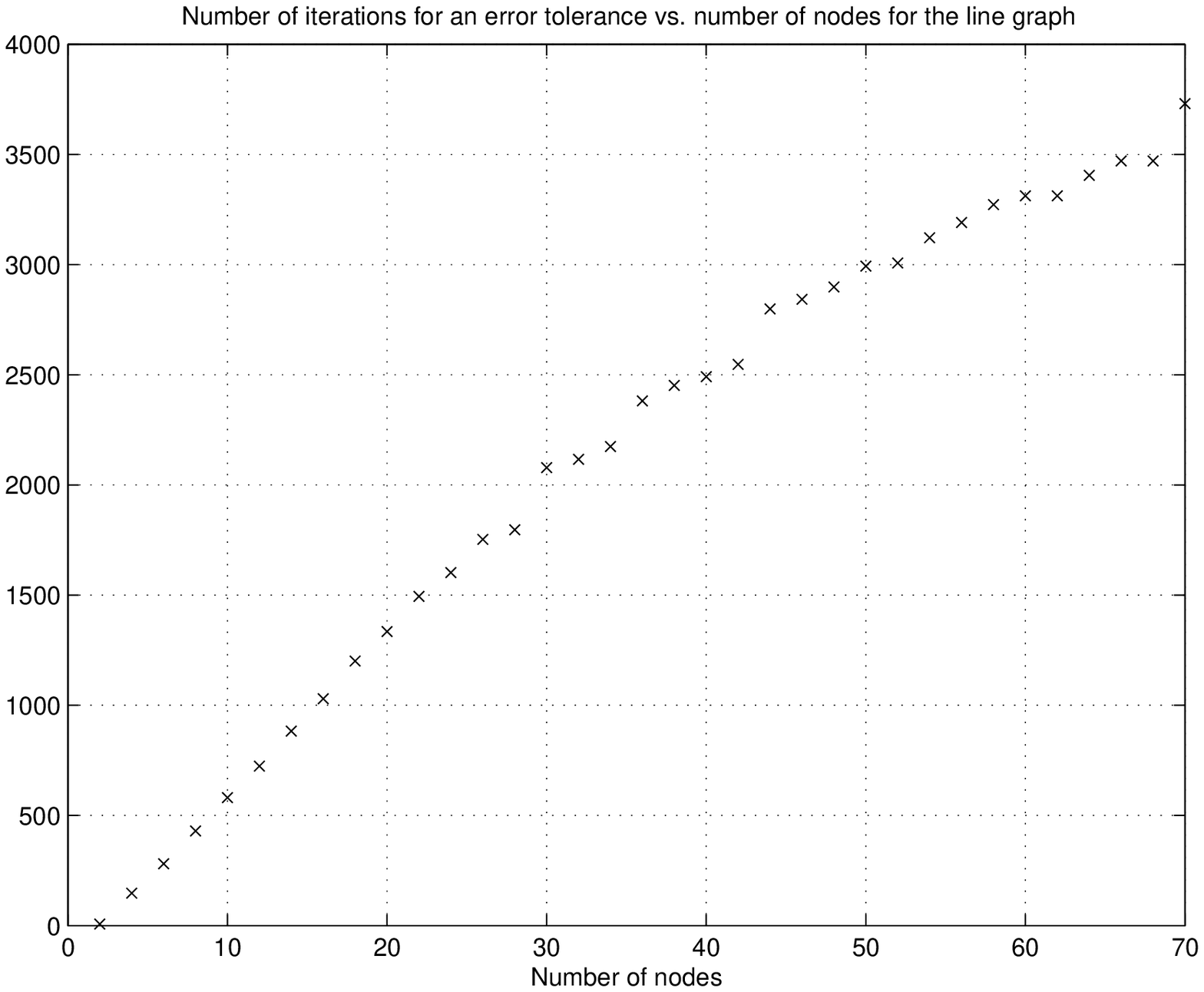} 
\end{array}$ 
\end{center} \caption{Both graphs show the time it takes for our protocol to reach an error of $0.05$, i.e., the time until 
$\max_i |x_i(t) - \frac{1}{n} \sum_{j=1}^n x_j(0)| \leq 0.05$. The number of nodes is shown on the x-axis. The left picture is
for the complete graph while the right picture is for the line graph. The initial condition is 
the vector with $x_1(0)=1$ and $x_k(0)=1$ for $k=2, \ldots, n$  in both cases, and as before, $\alpha=0.9$.}  
\label{nscaling}
\end{figure}

In the next Figure \ref{nscaling}, we are concerned with the time it takes our consensus protocol to achieve a certain level of error;
we plot the time until $\max_i |x_i(t) - \frac{1}{n} \sum_{j=1}^n x_j(0)| \leq 0.05$ for the complete 
graph and the line graph. As expected, for the complete graph, the time does not grow with $n$. We note how irregular this time is 
for the complete graph.  Finally, we see a reasonably slow growth with $n$ for the case of the line.

\begin{figure}[h] 
\begin{center}$
\begin{array}{ccc}
\includegraphics[width=3.2in]{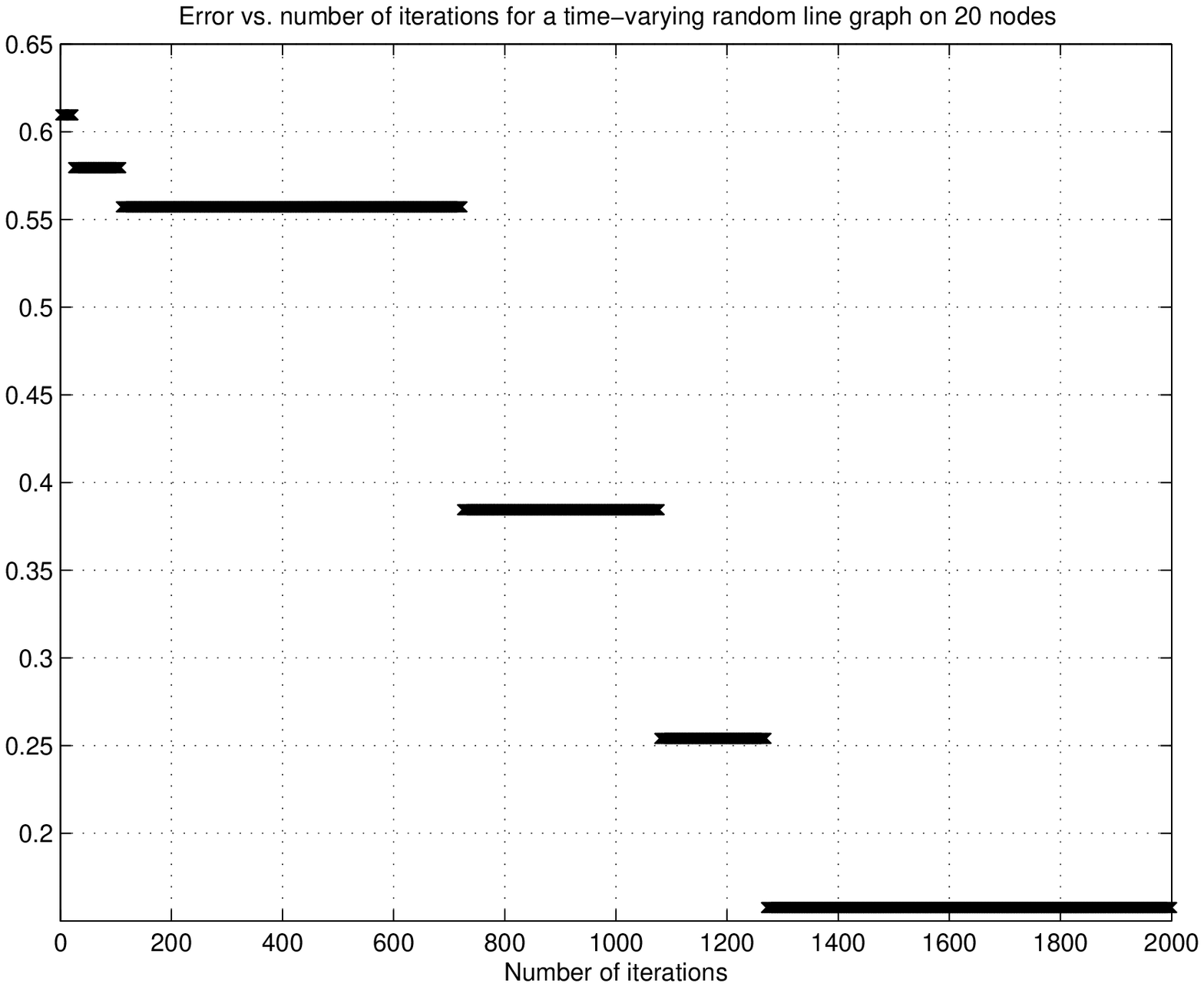} &
\includegraphics[width=3.2in]{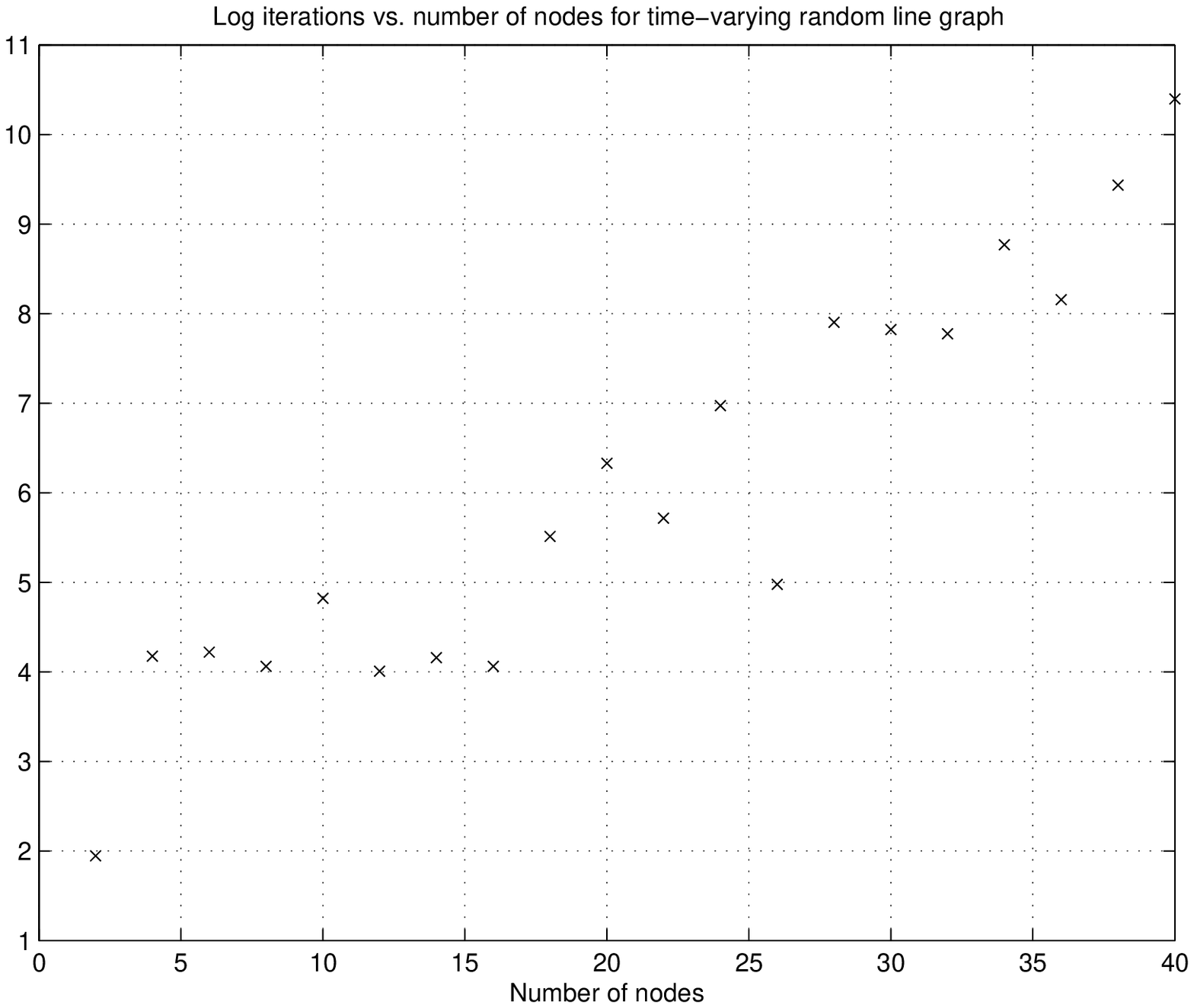} 
\end{array}$ 
\end{center} \caption{Both graphs show our protocol on a time-varying line graph which is randomly generated at each stage. The left
graph shows the decay of $\max_i |x_i(t) - \frac{1}{n} \sum_{j=1}^n x_j(0)|$ with time while the right graph shows the {\bf logarithm} of the time it takes to achieve 
$\max_i |x_i(t) - \frac{1}{n} \sum_{j=1}^n x_j(0)| \leq 0.05$.  The initial condition is random on the left and equal to 
the vector $x_1(0)=1$ and $x_k(0)=1$ for $k=2, \ldots, n$ on the right. In both cases, as before, $\alpha=0.9$.}  
\label{varying}
\end{figure}

Finally, in Figure \ref{varying}, we show our protocol on a time-varying graphs. We generate a random line at every stage by first relabeling 
the vertices uniformly at random and then connecting them in a standard line (i.e., putting an edge between $1$ and $2$, $2$ and $3$, and so forth). We see that the protocol appears to work, but the convergence
time is considerably slower than for either of our fixed graph examples. 

%

\section{Conclusions\label{conclusions}} We have provided a consensus protocol in which nodes exchange ternary messages at every step. In contrast
to the previous literature, our protocol can handle time-varying graph sequences, requires the exchange of only finitely many bits at every step, comes with a 
polynomial-time convergence rate, and does not need knowledge of either the sequence $G(t)$ or the number of agents to be implemented. 

Our paper naturally raises a number of open questions. First, our protocol uses more storage than the standard consensus algorithm due to the need
by each node $i$ to maintain the estimates $\hat{x}_{i,j, {\rm in}}, \hat{x}_{i,j, {\rm out}}$. Is it possible to avoid this without using 
``unphysical'' operations (for example,  interleaving the digits of two real numbers to form a single real number)? 

Secondly, our convergence time result was demonstrated to hold on $B$-core-connected graph sequences. However, most of the 
previous literature on consensus protocol has relied on the weaker assumption of $B$-connected 
graph sequences. An open question is to derive a similar convergence time result on the larger class of $B$-connected sequences. 

Third, while the standard consensus protocol can be said to rely on broadcasts in the sense that each neighbor of node $j$ measures or
receives the
same value $x_j(t)$ at time $t$, our protocol relies on individualized messages from every node to each of its neighbors. We wonder whether there exists a consensus
protocol with the same properties as ours (finitely many bits exchanged at each step, polynomial convergence rate on time-varying graphs, 
does not require knowledge of the graph sequence or of the number of which agents)  which relies only on broadcasts by each node at every step. 

Finally, the relation between speed, storage, and communication overhead in consensus is still poorly understood. There appear to be
some tradeoffs involved between these quantities; for example, the current paper which puts aside the assumption that nodes can transmit 
real numbers derives a slower convergence time bound in contrast to the best average consensus convergence time \cite{noot09} and has a larger amount
of storage at each node. However, whether there are indeed any formal tradeoffs between these quantities is not clear at present.

\section{Acknowledgements} The author would like to thank Zhirong Qiu for pointing out several errors in an earlier version of this
manuscript.

\end{document}